\newtheorem{lemma}{Lemma}
\newtheorem{theorem}{Theorem}
\newtheorem{definition}{Definition}
\newtheorem{assumption}{Assumption}
\newtheorem{remark}{Remark}
\DeclarePairedDelimiter{\norm}{\lVert}{\rVert}
\newcommand\bovermat[2]{%
	\makebox[0pt][l]{$\smash{\overbrace{\phantom{%
					\begin{matrix}#2\end{matrix}}}^{\text{#1}}}$}#2}
\tikzset{>={Latex[width=1.25mm,length=1.25mm]}}
\def\BibTeX{{\rm B\kern-.05em{\sc i\kern-.025em b}\kern-.08em
		T\kern-.1667em\lower.7ex\hbox{E}\kern-.125emX}}
\DeclareMathOperator*{\argmin}{arg\,min}
\let\NAT@parse\undefined
\newcommand{\removelatexerror}{\let\@latex@error\@gobble}
\title{\LARGE \bf
Data-based Control of Feedback Linearizable Systems
}
\author{Mohammad Alsalti$^{1}$, Victor G. Lopez$^{1}$, Julian Berberich$^{2}$, Frank Allgöwer$^{2}$, and Matthias A. Müller$^{1}$ %
	\thanks{$^{1}$Leibniz University Hannover, Institute of Automatic Control, 30167 Hannover, Germany. E-mail:\{\href{maitlo:alsalti@irt.uni-hannover.de}{alsalti},\href{maitlo:lopez@irt.uni-hannover.de}{lopez},\href{maitlo:mueller@irt.uni-hannover.de}{mueller}\}@irt.uni-hannover.de}%
	\thanks{$^{2}$University of Stuttgart, Institute for Systems Theory and Automatic Control, 70550 Stuttgart, Germany. E-mail: \{\href{maitlo:julian.berberich@ist.uni-stuttgart.de}{julian.berberich}, \href{maitlo:frank.allgower@ist.uni-stuttgart.de}{frank.allgower}\}@ist.uni-stuttgart.de}%
	\thanks{This work has received funding from the European Research Council (ERC) under the European Union’s
		Horizon 2020 research and innovation programme (grant agreement No 948679). This work was also funded by Deutsche Forschungsgemeinschaft (DFG, German Research Foundation) under Germany’s Excellence Strategy - EXC 2075 - 390740016 and under grant 468094890. We acknowledge the support by the Stuttgart Center for Simulation Science (SimTech).}
}
\newcommand\copyrighttext{%
	\footnotesize \copyright 2023 IEEE. Personal use of this material is permitted. Permission from IEEE must be obtained for all other uses, in any current or future media, including reprinting/republishing this material for advertising or promotional purposes, creating new collective works, for resale or redistribution to servers or lists, or reuse of any copyrighted component of this work in other works.}
\newcommand\copyrightnotice{%
	\begin{tikzpicture}[remember picture,overlay]
		\node[anchor=south,yshift=3pt] at (current page.south) {\fbox{\parbox{\dimexpr\textwidth-\fboxsep-\fboxrule\relax}{\copyrighttext}}};
	\end{tikzpicture}%
}
\begin{document}
	\maketitle
	\thispagestyle{empty}
	\pagestyle{empty}
	\copyrightnotice
	\begin{abstract}
	We present an extension of Willems' Fundamental Lemma to the class of multi-input multi-output discrete-time feedback linearizable nonlinear systems, thus providing a data-based representation of their input-output trajectories. Two sources of uncertainty are considered. First, the unknown linearizing input is inexactly approximated by a set of basis functions. Second, the measured output data is contaminated by additive noise. Further, we propose an approach to approximate the solution of the data-based simulation and output matching problems, and show that the difference from the true solution is bounded. Finally, the results are illustrated on an example of a fully-actuated double inverted pendulum.
	\end{abstract}

	\section{Introduction}
	Over the past two decades, researchers explored designing controllers directly from data without explicitly identifying a mathematical model of the system (cf. \cite{Hou13} and the references therein). In contrast to model-based control techniques, direct data-based control can be useful in cases where modeling complex systems from first principles is challenging \cite{Dorfler22_TAC}.\par
	A remarkable result from behavioral control theory \cite{Willems05} states that for a controllable, discrete-time linear time-invariant (DT-LTI) system, the entire vector space of input-output trajectories can be spanned by a single, persistently exciting, input-output trajectory. Now known as the \textit{fundamental lemma}, this result recently motivated a large number of works in the field of direct data-based system analysis and controller design. For example, it was used for data-based simulation and control of DT-LTI systems \cite{Rapisarda08}. It has also been translated to the state-space framework \cite{Berberich20, vanWaarde20} and was used to design {LQR} controllers \cite{Lopez21, Persis20, Florian21}, as well as predictive controllers \cite{Coulson20, Coulson21} with stability and robustness guarantees \cite{Berberich203, Berberich204}. Extensions to Hammerstein-Wiener systems appeared in \cite{Berberich20} and Second-Order Volterra systems in \cite{Escobedo20}. It was also used to design controllers for classes of nonlinear systems purely from data in \cite{Bisoffi20, Luppi21, Guo21, DePersis22}. For a more complete and comprehensive review, the reader is referred to the review paper \cite{Markovsky21}. Apart from the fundamental lemma, data-driven stabilization for single-input single-output (SISO) feedback linearizable nonlinear systems appeared in \cite{Tabuada17}. There, input-affine continuous-time systems were addressed assuming constant inter-sampling behavior of the states under high enough sampling rate.\par
	In this paper, we build on our previous work \cite{AlsaltiBerLopAll2021}, where SISO flat systems were investigated and a data-based system representation was given assuming that an exact expansion using basis functions is known. Compared to \cite{AlsaltiBerLopAll2021}, here we treat general multi-input multi-output (MIMO) full-state feedback linearizable nonlinear systems, which model a variety of physical systems, e.g., robotic manipulators (other examples can be found in \cite{Murray95differentialflatness}). Furthermore, we provide suitable error bounds for the results of the simulation and output matching control problems when \textbf{(i)} the given basis functions do not exactly represent the unknown nonlinearities, and \textbf{(ii)} the measured output data is noisy.\par
	After reviewing notation, definitions and existing results in Section \ref{prel}, we provide a data-based representation of feedback linearizable nonlinear systems in Section \ref{DB_analysis_sec} by exploiting linearity in transformed coordinates along with a set of basis functions that depend only on input-output data to approximate the unknown nonlinearities. In Section \ref{sim_om_sec}, we provide constructive methods to approximately solve the simulation and output-matching control problems despite basis functions approximation error and only using input and noisy output data. We then show that the difference between the estimated and true outputs is upper bounded. We illustrate the results on a model of a fully-actuated double inverted pendulum in Section \ref{examples} and conclude the paper in Section \ref{conclusion_sec}.\par
	\section{Preliminaries}\label{prel}
	\subsection{Notation}
	The set of integers in the interval $[a,b]$ is denoted by $\mathbb{Z}_{[a,b]}$. For a vector $w\in\mathbb{R}^n$, $p-$norms for $p=1,2,\infty$ are denoted by $\norm*{w}_p$, respectively, whereas $\norm*{M}_i$ for $i=1,2,\infty$ denotes the induced norm of a matrix $M$. We use $\mathbf{0}$ to denote a vector or a matrix of zeros of appropriate dimensions. An $n\times n$ identity matrix is denoted by $I_n$.\par
	For a sequence $\{\mathbf{z}_k\}_{k=0}^{N-1}$ with $\mathbf{z}_k\in\mathbb{R}^\eta$, each element is expressed as $\mathbf{z}_k=\begin{bmatrix}
	z_{1,k} & z_{2,k} & \dots & z_{\eta,k}
	\end{bmatrix}^\top$. The stacked vector of that sequence is given by $\mathbf{z}=\begin{bmatrix}
	\mathbf{z}_0^\top & \dots & \mathbf{z}_{N-1}^\top
	\end{bmatrix}^\top$, and a window of it by $\mathbf{z}_{[a,b]}=\begin{bmatrix}
	\mathbf{z}_a^\top & \dots & \mathbf{z}_b^\top
	\end{bmatrix}^\top$. The Hankel matrix of depth $L$ of this sequence is given by
	\begin{equation*}
		\begin{aligned}
			H_L(\mathbf{z})&=\begin{bmatrix}
				\mathbf{z}_0 & \mathbf{z}_1 & \dots & \mathbf{z}_{N-L}\\
				\mathbf{z}_1 & \mathbf{z}_2 & \dots & \mathbf{z}_{N-L+1}\\
				\vdots & \vdots & \ddots & \vdots\\
				\mathbf{z}_{L-1} & \mathbf{z}_L & \dots & \mathbf{z}_{N-1}
			\end{bmatrix}.
		\end{aligned}
	\end{equation*}
	\indent Throughout the paper, the notion of persistency of excitation (PE) is defined as follows.
	\begin{definition}
		The sequence $\{\mathbf{z}_k\}_{k=0}^{N-1}$ is said to be persistently exciting of order $L$ if \textup{rank}$\left(H_L(\mathbf{z})\right)=\eta L$.
	\end{definition}
	\subsection{Discrete-time feedback linearizable systems}
	Consider the following DT-MIMO square nonlinear system
	\begin{equation}
		\begin{matrix}
			\mathbf{x}_{k+1} = \boldsymbol{f}(\mathbf{x}_k,\mathbf{u}_k),&\quad&
			\mathbf{y}_k = \boldsymbol{h}(\mathbf{x}_k),
		\end{matrix}
		\label{NLsys}
	\end{equation}
	where $\mathbf{x}_k\in\mathbb{R}^n$ is the state vector and $\mathbf{u}_k,\mathbf{y}_k\in\mathbb{R}^m$ are the input and output vectors, respectively. The functions $\boldsymbol{f}:\mathbb{R}^n\times\mathbb{R}^m\to\mathbb{R}^n$, $\boldsymbol{h}:\mathbb{R}^n\to\mathbb{R}^m$ are analytic functions with $\boldsymbol{f}(\mathbf{0},\mathbf{0})=\mathbf{0}$ and $\boldsymbol{h}(\mathbf{0})=\mathbf{0}$. We define $\boldsymbol{f}_O^j$ as the $j-$th iterated composition of the undriven dynamics $\boldsymbol{f}_O\coloneqq\boldsymbol{f}(\cdot,\mathbf{0})$. A sequence $\{\mathbf{u}_k,\mathbf{y}_k\}_{k=0}^{N-1}$ is said to be an input-output trajectory of the nonlinear system \eqref{NLsys} if there exists an initial condition $\mathbf{x}_0$ such that \eqref{NLsys} holds for all $k\in\mathbb{Z}_{[0,N-1]}$.\par
	As defined in \cite{MonacoNor1987}, each output $y_i=h_i(\mathbf{x})$ of the nonlinear system \eqref{NLsys}, for $i\in\mathbb{Z}_{[1,m]}$, is said to have a (globally) well-defined relative degree $d_i$ if at least one of the $m$ inputs at time $k$ affects the $i-$th output at time $k+d_i$. In particular,
	\begin{equation}
	\begin{aligned}
		y_{i,k+d_i} &= h_i(\boldsymbol{f}_O^{d_i-1}\left(\boldsymbol{f}(\mathbf{x}_k,\mathbf{u}_k)\right)).
		\label{output_k}
	\end{aligned}
	\end{equation}
	\indent To bring the system in \eqref{NLsys} to the DT normal form, we make the following standard assumptions (see \cite{MonacoNor1987}).
		\begin{assumption}\label{MIMO_img_assmp}
			For any $\mathbf{x}_k\in\mathbb{R}^n$, $\exists\,\tilde{\mathbf{u}}_k\in\mathbb{R}^m$ such that
			\begin{equation}
				h_i(\boldsymbol{f}_O^{d_i-1}\left(\boldsymbol{f}(\mathbf{x}_k,\tilde{\mathbf{u}}_k)\right))=0,\qquad \forall i\in\mathbb{Z}_{[1,m]}.
			\end{equation}
		\end{assumption}
		\begin{assumption}\label{D_full_rank_assmp}
			For any $\mathbf{x}_k\in\mathbb{R}^n$, the decoupling matrix $\mathcal{D}(\mathbf{x}_k,\mathbf{u}_k)$  evaluated at $\tilde{\mathbf{u}}_k$ has rank$\left(\mathcal{D}(\mathbf{x}_k,\tilde{\mathbf{u}}_k)\right) = m$, where
			\begin{equation}
				\mathcal{D}(\mathbf{x}_k,{\mathbf{u}}_k)_{\{i,j\}} ={\partial  h_i(\boldsymbol{f}_O^{d_i-1}\left(\boldsymbol{f}(\mathbf{x}_k,{\mathbf{u}}_k)\right))}\big/{\partial {u}_{j,k}}.
			\end{equation}
		\end{assumption}
		\begin{assumption}\label{sum_di_asmp}
			The sum of relative degrees of the outputs of system \eqref{NLsys} is equal to the system dimension, i.e., $\sum_i d_i=n$.
	\end{assumption}
	Assumptions \ref{MIMO_img_assmp} and \ref{D_full_rank_assmp} are standard and they are needed to invoke the implicit function theorem and show existence of an invertible coordinate transformation and a feedback linearizing control law $\mathbf{u}_k=\gamma(\mathbf{x}_k,\mathbf{v}_k)$ that results in a linear map from $\mathbf{v}_k$ to $\mathbf{y}_k$ (cf. \cite[Prop. 3.1]{MonacoNor1987}). Assumption \ref{sum_di_asmp} is needed for the system to be full-state feedback linearizable, i.e., having no internal dynamics. For globally well-defined relative degrees, this condition can be checked by perturbing the system from rest and recording the first time instants at which each output changes from zero (cf. \eqref{output_k}). If the sum of all these instances is $n$, Assumption \ref{sum_di_asmp} is fulfilled. Feedback linearization of DT-MIMO nonlinear systems is formally stated as follows.
	\begin{theorem}[\hspace{-0.25mm}\cite{MonacoNor1987}, Prop. 3.1]\label{MIMO_FL_thm}
		Let Assumptions \ref{MIMO_img_assmp}-\ref{sum_di_asmp} be satisfied, then there exists an invertible (w.r.t. $\mathbf{v}_k$) feedback control law $\mathbf{u}_k=\gamma(\mathbf{x}_k,\mathbf{v}_k)$, with $\gamma:\mathbb{R}^n\times\mathbb{R}^m\to\mathbb{R}^m$ and an invertible coordinate transformation $\Xi_k=T(\mathbf{x}_k)$, such that system \eqref{NLsys} is input-output decoupled and can be written as
		\begin{equation}
			\begin{matrix}
				\Xi_{k+1} = \mathcal{A}\Xi_k + \mathcal{B}\mathbf{v}_k,&\quad
				\mathbf{y}_k = \mathcal{C}\Xi_k,
			\end{matrix}
			\label{BINF}%
		\end{equation}%
		where $\Xi_k=\begin{bmatrix}\xi_{1,k}&\dots&\xi_{n,k}\end{bmatrix}^\top\in\mathbb{R}^{n}$ is defined as
		\begin{equation}
			\begin{aligned}
				\Xi_k &=\begin{bmatrix}
					y_{1,[k,k+d_1-1]}^\top & \dots & y_{m,[k,k+d_m-1]}^\top
				\end{bmatrix}^\top.
			\end{aligned}
			\label{Xi}
		\end{equation}
		Further, $\mathcal{A},\,\mathcal{B},\,\mathcal{C}$ are in block-Brunovsky\footnote{We use the term ``block-Brunovsky form" to refer to block diagonal matrices with Brunovsky canonical form matrices as diagonal elements. See Appendix \ref{Bruno_app} for more information on the structure of the $\mathcal{A,B,C}$ matrices.} form, which are a controllable/observable triplet.
	\end{theorem}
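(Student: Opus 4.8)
The plan is to build the transformation $T$ and the feedback $\gamma$ explicitly from the relative-degree structure, and then verify the three claims of the statement (that $T$ is a change of coordinates, that $\gamma$ is a well-defined invertible feedback, and that the result is in block-Brunovsky form) one at a time. First I would take the $n$ coordinate functions collected in $\Xi_k=T(\mathbf{x}_k)$ of \eqref{Xi} to be the undriven forward iterates of the outputs,
\begin{equation*}
	y_{i,k+j}=h_i(\boldsymbol{f}_O^{j}(\mathbf{x}_k)),\qquad 0\le j\le d_i-1,\ \ 1\le i\le m,
\end{equation*}
each of which depends on $\mathbf{x}_k$ \emph{alone}, since by definition of the relative degree $d_i$ no input enters the $i$-th output before time $k+d_i$. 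Advancing one step, the coordinate $y_{i,k+j}$ becomes $y_{i,k+1+j}$; for $0\le j\le d_i-2$ this is again a coordinate of $\Xi_{k+1}$, which produces precisely the super-diagonal shift of a Brunovsky chain, whereas the single ``top'' coordinate $y_{i,k+d_i}$ is, by \eqref{output_k}, the first iterate in which $\mathbf{u}_k$ appears.

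Second, I would impose the linearizing assignment $y_{i,k+d_i}=v_{i,k}$ for all $i$, i.e.\ solve
\begin{equation*}
	F(\mathbf{x}_k,\mathbf{u}_k)\coloneqq\begin{bmatrix} h_1(\boldsymbol{f}_O^{d_1-1}(\boldsymbol{f}(\mathbf{x}_k,\mathbf{u}_k)))\\ \vdots \\ h_m(\boldsymbol{f}_O^{d_m-1}(\boldsymbol{f}(\mathbf{x}_k,\mathbf{u}_k)))\end{bmatrix}=\mathbf{v}_k
\end{equation*}
for $\mathbf{u}_k$. The Jacobian $\partial F/\partial\mathbf{u}_k$ is exactly the decoupling matrix $\mathcal{D}(\mathbf{x}_k,\mathbf{u}_k)$. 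Assumption \ref{MIMO_img_assmp} supplies a base point $\tilde{\mathbf{u}}_k$ with $F(\mathbf{x}_k,\tilde{\mathbf{u}}_k)=\mathbf{0}$, and Assumption \ref{D_full_rank_assmp} makes $\mathcal{D}$ nonsingular there, so the implicit function theorem yields a locally unique, smooth solution $\mathbf{u}_k=\gamma(\mathbf{x}_k,\mathbf{v}_k)$ with $\gamma(\mathbf{x}_k,\mathbf{0})=\tilde{\mathbf{u}}_k$; invertibility with respect to $\mathbf{v}_k$ follows from the same rank condition. Substituting $\gamma$ closes each chain with $y_{i,k+d_i}=v_{i,k}$, so in the $\Xi$-coordinates the system reads $\Xi_{k+1}=\mathcal{A}\Xi_k+\mathcal{B}\mathbf{v}_k$, $\mathbf{y}_k=\mathcal{C}\Xi_k$ with $(\mathcal{A},\mathcal{B},\mathcal{C})$ block-diagonal, the $i$-th block being a Brunovsky integrator triple of size $d_i$.

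Finally, I would confirm that $T$ is genuinely a change of coordinates. Assumption \ref{sum_di_asmp} makes the number of coordinate functions equal to $\sum_i d_i=n$, so $T:\mathbb{R}^n\to\mathbb{R}^n$ is square and no residual internal dynamics can remain; its invertibility reduces to the linear independence of the differentials of the $n$ maps $h_i\circ\boldsymbol{f}_O^{j}$, which is the discrete-time analogue of the standard linear-independence-of-Lie-derivatives argument and again rests on the nonsingularity of $\mathcal{D}$. Controllability and observability of $(\mathcal{A},\mathcal{B},\mathcal{C})$ are then immediate from the Brunovsky structure. I expect the delicate step to be this last one: upgrading $T$ from a \emph{local} diffeomorphism and $\gamma$ from a \emph{local} solution to genuinely global objects. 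Since Assumptions \ref{MIMO_img_assmp}--\ref{D_full_rank_assmp} are posed pointwise for every $\mathbf{x}_k\in\mathbb{R}^n$, one must argue that the local inverses furnished by the implicit function theorem patch together consistently — exactly the point at which the hypothesis of \emph{globally} well-defined relative degrees is used.
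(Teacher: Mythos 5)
The paper does not prove this theorem itself; it is imported verbatim from \cite{MonacoNor1987}, and the only justification offered is the remark that Assumptions \ref{MIMO_img_assmp}--\ref{D_full_rank_assmp} permit invoking the implicit function theorem to obtain $\gamma$ and $T$ while Assumption \ref{sum_di_asmp} excludes internal dynamics --- which is exactly the route you take. Your reconstruction (coordinates built from the undriven output iterates $h_i\circ\boldsymbol{f}_O^{j}$, feedback obtained by solving the decoupling equations around the base point $\tilde{\mathbf{u}}_k$ where the decoupling matrix is nonsingular, and the honest flag that patching the local inverses into global objects is the delicate step) is the standard argument and consistent with the cited source.
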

	\indent Theorem \ref{MIMO_FL_thm} shows that a nonlinear system \eqref{NLsys} that satisfies Assumptions \ref{MIMO_img_assmp}-\ref{sum_di_asmp} has an equivalent linear representation where the input-state and input-output maps are linear and decoupled. That is, each synthetic input $v_{i}$ only affects its corresponding output $y_{i}$ for all $i\in\mathbb{Z}_{[1,m]}$. In the next section, we use Theorem \ref{MIMO_FL_thm} to provide a data-based representation of trajectories of full-state feedback linearizable systems.
	\section{Willems' Fundamental Lemma for Feedback Linearizable Systems}\label{DB_analysis_sec}
	{In the setting of data-based control, one typically only has access to input-output data and not to the synthetic input $\mathbf{v}_k$ or the corresponding state transformation $T(\mathbf{x}_k)$.} In order to come up with a data-based description of the trajectories of the nonlinear system \eqref{NLsys} (or the equivalent system \eqref{BINF}), the synthetic input is expressed as (cf. \cite{MonacoNor1987})
	\begin{equation}
		\begin{aligned}
			\mathbf{v}_k \hspace{-1mm}= \hspace{-1mm}\begin{bmatrix}
				v_{1,k} \\ \vdots \\ v_{m,k}
			\end{bmatrix} \hspace{-1mm}=\hspace{-1mm} \begin{bmatrix}
				\hspace{-0.5mm}{h}_1\hspace{-0.5mm}\left(\boldsymbol{f}_O^{d_1-1}\left(\boldsymbol{f}(\mathbf{x}_k,\mathbf{u}_k)\right)\right)\hspace{-0.5mm}\\
				\vdots\\
				\hspace{-0.5mm}{h}_m\hspace{-0.5mm}\left(\boldsymbol{f}_O^{d_m-1}\left(\boldsymbol{f}(\mathbf{x}_k,\mathbf{u}_k)\right)\right)\hspace{-0.5mm}
			\end{bmatrix}\hspace{-1mm}\eqqcolon\hspace{-1mm} \tilde{\Phi}(\mathbf{u}_k,\mathbf{x}_k).
		\end{aligned}
		\label{definition_of_v_and_Phi}
	\end{equation}
	By Theorem \ref{MIMO_FL_thm}, it holds that $\mathbf{x}_k = T^{-1}(\Xi_k)$ and, hence, one can define $\Phi(\mathbf{u}_k,\Xi_k)\coloneqq\tilde{\Phi}(\mathbf{u}_k,T^{-1}(\Xi_k))$, which allows us to parameterize $\mathbf{v}_k$ using input-output data only since $\Xi_k$ is given by shifted outputs (see \eqref{Xi}). Note that ${\Phi}(\mathbf{u}_k,\Xi_k)$ is unknown, therefore we approximate it by a set of basis functions that depend only on input and output data. In particular,
	\begin{align}
		\mathbf{v}_k & = \Phi(\mathbf{u}_k,\Xi_k)
		\label{basis}\\
		&= \hspace{-1mm}\begin{bmatrix}
				\phi_1(\mathbf{u}_k,\Xi_k)\\ \vdots \\ \phi_m(\mathbf{u}_k,\Xi_k)
		\end{bmatrix} \hspace{-1mm}=\hspace{-1mm} \begin{bmatrix}
			\rule[.5ex]{1em}{0.4pt} \,g_1^\top\, \rule[.5ex]{1em}{0.4pt}\\
			\vdots\\
			\rule[.5ex]{1em}{0.4pt} \,g_m^\top\, \rule[.5ex]{1em}{0.4pt}\\
		\end{bmatrix}\hspace{-1mm}\Psi(\mathbf{u},{{\Xi}_k})\hspace{-1mm} + \hspace{-1mm}\begin{bmatrix}
		\varepsilon_1(\mathbf{u}_k,{{\Xi}_k})\\ \vdots\\ \varepsilon_m(\mathbf{u}_k,{{\Xi}_k})
		\end{bmatrix}\notag\\
		&\eqqcolon \mathcal{G}\Psi(\mathbf{u}_k,{{\Xi}_k}) + \scalebox{1.5}{$\epsilon$}(\mathbf{u}_k,{{\Xi}_k}),\notag
	\end{align}
	where $\Psi(\mathbf{u}_k,{{\Xi}_k})$ is the vector of $r\in\mathbb{N}$ locally Lipschitz continuous and linearly independent basis functions $\psi_j:\mathbb{R}^m\times\mathbb{R}^n\to\mathbb{R}$ for $j\in\mathbb{Z}_{[1,r]}$ and $\scalebox{1.5}{$\epsilon$}(\mathbf{u}_k,{{\Xi}_k})$ is the stacked vector of approximation errors $\varepsilon_i:\mathbb{R}^m\times\mathbb{R}^n\to\mathbb{R}$ for $i\in\mathbb{Z}_{[1,m]}$. The term $\mathcal{G}\in\mathbb{R}^{m\times r}$ is the matrix of unknown coefficients of the basis functions and $g_i^\top$, for $i\in\mathbb{Z}_{[1,m]}$, represent its rows.
	For the theoretical analysis presented in this paper, we define, \emph{but never compute}, $\mathcal{G}$ as follows\footnote{The inner product is given by $\left< \rho_1, \rho_2 \right> = \int_\Omega \rho_1(\ell_1,\ell_2)\rho_2(\ell_1,\ell_2) d\ell_1 d\ell_2$ for some $\Omega\subset \mathbb{R}^m\times\mathbb{R}^n$.}
	\begin{equation}
		\mathcal{G}\coloneqq\argmin\limits_{{G}} \left\langle  \Phi- {G}\Psi, \Phi - {G}\Psi \right\rangle .\label{def_of_G_mat}
	\end{equation}
	\indent The minimization problem in \eqref{def_of_G_mat} is a least-squares problem that minimizes the average approximation error on a compact subset of the input-state space $\Omega\subset\mathbb{R}^m\times\mathbb{R}^n$.
	Furthermore, if the choice of basis functions contains $\Phi(\mathbf{u}_k,\Xi_k)$ in their span and the data is noiseless, then a unique solution to \eqref{def_of_G_mat} exists and the results of \cite{AlsaltiBerLopAll2021} are retrieved. For the subsequent analysis, the following assumption is made on $\mathcal{G}$.
	\begin{assumption}\label{rank_assmp_G}
		The matrix of coefficients $\mathcal{G}$ has full row rank.
	\end{assumption}
	From \eqref{output_k}, \eqref{definition_of_v_and_Phi} and \eqref{basis} (cf. also \eqref{y_for_proofs} below), it can be seen that Assumption~\ref{rank_assmp_G} corresponds to the outputs being linearly independent. This is fulfilled in, e.g., robotic manipulators and other fully-actuated mechanical systems (cf.~\cite{Murray95differentialflatness}).\par
	Inevitably, measured data is noisy. In what follows, we denote the collected output measurements by $\mathbf{\tilde{y}}_k=\mathbf{y}_k+\mathbf{w}_k$, where $\norm*{\mathbf{w}_k}_\infty\leq {w}^*$, for all $k\geq0$, is a uniformly bounded output measurement noise. As a result of using noisy data, the unknown nonlinear function $\Phi$ in \eqref{basis} is now expressed as
		\begin{equation}
			\Phi(\mathbf{u}_k,\Xi_k) = \mathcal{G}\Psi(\mathbf{u}_k,\tilde{\Xi}_k) + \scalebox{1.5}{$\epsilon$}(\mathbf{u}_k,\tilde{\Xi}_k) + \delta(\omega_k),
			\label{noisy_basis}
		\end{equation}
		where $\delta(\omega_k)\coloneqq \mathcal{G}\Psi(\mathbf{u}_k,{\Xi}_k) + \scalebox{1.5}{$\epsilon$}(\mathbf{u}_k,{\Xi}_k) - \mathcal{G}\Psi(\mathbf{u}_k,\tilde{\Xi}_k) - \scalebox{1.5}{$\epsilon$}(\mathbf{u}_k,\tilde{\Xi}_k)$, $\tilde{\Xi}_k=\Xi_k+\omega_k$ and $\omega_k=\begin{bmatrix}w_{1,[k,k+d_1-1]}^\top\,\cdots\, w_{m,[k,k+d_m-1]}^\top\end{bmatrix}^\top$. Now, one can substitute \eqref{noisy_basis} back into \eqref{BINF} to obtain
			\begin{align}
				\Xi_{k+1} &= \mathcal{A}\Xi_k + \mathcal{B}\mathcal{G}(\Psi(\mathbf{u}_k,{\tilde{\Xi}_k})+E(\mathbf{u}_k,{\tilde{\Xi}_k})+ D(\omega_k)),\notag\\
				{\mathbf{\tilde{y}}_k }&{= \mathcal{C}\Xi_k + \mathbf{w}_k},		\label{Lsys2}
			\end{align}
	where $E(\mathbf{u}_k,{\tilde{\Xi}_k})\coloneqq \mathcal{G}^\dagger\scalebox{1.5}{$\epsilon$}(\mathbf{u}_k,{\tilde{\Xi}_k})$, $D(\omega_k)=\mathcal{G}^\dagger\delta(\omega_k)$ and $\mathcal{G}^\dagger\coloneqq \mathcal{G}^\top(\mathcal{GG}^\top)^{-1}$ is a right inverse of $\mathcal{G}$, which exists by Assumption \ref{rank_assmp_G}. For convenience, we use the following notation throughout the paper
	\begin{equation}
			\begin{matrix}
			\hat{\Psi}_k(\mathbf{u},{\tilde{\Xi}}) \coloneqq \Psi(\mathbf{u}_k,{\tilde{\Xi}_k}), &
			\hat{\scalebox{1.5}{$\epsilon$}}_k(\mathbf{u},{\tilde{\Xi}}) \coloneqq 	\scalebox{1.5}{$\epsilon$}(\mathbf{u}_k,{\tilde{\Xi}_k}), \\
			\hat{\varepsilon}_{i,k}(\mathbf{u},{\tilde{\Xi}}) \coloneqq 	\varepsilon_i(\mathbf{u}_k,{\tilde{\Xi}_k}), &
			\hat{E}_k(\mathbf{u},{\tilde{\Xi}}) \coloneqq E(\mathbf{u}_k,{\tilde{\Xi}_k}),\\
			\hat{\delta}_k(\omega) \coloneqq \delta(\omega_k), & \hat{D}_k(\omega) \coloneqq D(\omega_k).
		\end{matrix}\label{imp_defs}
	\end{equation}
	Using the notation in \eqref{imp_defs}, system \eqref{Lsys2} can now be written as
		\begin{align}
			\Xi_{k+1} &= \mathcal{A}\Xi_k + \mathcal{B}\mathcal{G}(\hat{\Psi}_k(\mathbf{u},{\tilde{\Xi}})+\hat{E}_k(\mathbf{u},{\tilde{\Xi}}) + \hat{D}_k(\omega)),\notag\\
			{\mathbf{\tilde{y}}_k }&{= \mathcal{C}\Xi_k + \mathbf{w}_k},	\label{Lsys3}
		\end{align}%
	For the system in \eqref{Lsys3}, the following holds.
	\begin{lemma}\label{lemma_controllable_pair}
		The pair $(\mathcal{A,BG})$ is controllable.
	\end{lemma}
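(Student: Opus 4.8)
The plan is to reduce the claim to the elementary fact that controllability of a pair $(\mathcal{A},B)$ depends only on the \emph{range} of the input matrix $B$, and then to exploit the full-row-rank property of $\mathcal{G}$ granted by Assumption~\ref{rank_assmp_G}. The starting point is Theorem~\ref{MIMO_FL_thm}, which already tells us that $(\mathcal{A},\mathcal{B})$ is a controllable pair; the task is thus to show that right-multiplying $\mathcal{B}$ by $\mathcal{G}$ cannot destroy this property.

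First I would record the key range identity. Since $\mathcal{G}\in\mathbb{R}^{m\times r}$ has full row rank $m$ (which in particular forces $r\geq m$), the linear map $\mathcal{G}:\mathbb{R}^r\to\mathbb{R}^m$ is surjective, i.e. $\mathrm{Im}(\mathcal{G})=\mathbb{R}^m$. Consequently the range of the product satisfies $\mathrm{Im}(\mathcal{B}\mathcal{G})=\mathcal{B}\,\mathrm{Im}(\mathcal{G})=\mathcal{B}\,\mathbb{R}^m=\mathrm{Im}(\mathcal{B})$. In words, inserting the synthetic-input mixing $\mathcal{G}$ leaves unchanged the subspace of $\mathbb{R}^n$ into which the input can act directly.

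Next I would invoke the Hautus (PBH) test. For every $\lambda\in\mathbb{C}$, the column space of $[\,\mathcal{A}-\lambda I_n \mid \mathcal{B}\mathcal{G}\,]$ equals $\mathrm{Im}(\mathcal{A}-\lambda I_n)+\mathrm{Im}(\mathcal{B}\mathcal{G})=\mathrm{Im}(\mathcal{A}-\lambda I_n)+\mathrm{Im}(\mathcal{B})$, which is exactly the column space of $[\,\mathcal{A}-\lambda I_n \mid \mathcal{B}\,]$. Hence the two matrices have the same rank for every $\lambda$. Because $(\mathcal{A},\mathcal{B})$ is controllable we have $\mathrm{rank}\,[\,\mathcal{A}-\lambda I_n \mid \mathcal{B}\,]=n$ for all $\lambda$, so the same holds with $\mathcal{B}\mathcal{G}$ in place of $\mathcal{B}$, and the PBH test yields controllability of $(\mathcal{A},\mathcal{B}\mathcal{G})$. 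An equivalent route is to note that the controllable subspace $\sum_{j=0}^{n-1}\mathcal{A}^{j}\,\mathrm{Im}(\cdot)$ coincides for $\mathcal{B}$ and $\mathcal{B}\mathcal{G}$ by the same range equality, so both equal $\mathbb{R}^n$.

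I do not expect a genuine obstacle here: the only step that actually consumes the hypotheses is the range identity $\mathrm{Im}(\mathcal{B}\mathcal{G})=\mathrm{Im}(\mathcal{B})$, which is precisely where Assumption~\ref{rank_assmp_G} is indispensable (a rank-deficient $\mathcal{G}$ could annihilate a direction of $\mathrm{Im}(\mathcal{B})$ and break controllability). Everything surrounding it is standard linear algebra, so the care needed is just to state the surjectivity of $\mathcal{G}$ cleanly and to apply the PBH rank equality uniformly in $\lambda$.
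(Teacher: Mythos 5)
Your proof is correct, but it takes a genuinely different route from the paper's. The paper proves the lemma by writing out the controllability matrix $\begin{bmatrix}\mathcal{BG} & \mathcal{ABG} & \cdots & \mathcal{A}^{n-1}\mathcal{BG}\end{bmatrix}$ explicitly and exploiting the block-Brunovsky structure of $(\mathcal{A},\mathcal{B})$: each block row reduces to an anti-diagonal arrangement of copies of $g_i^\top$, which has rank $d_i$ because $g_i^\top\neq\mathbf{0}$, and the concatenation of all block rows has rank $\sum_i d_i=n$ because the rows $g_1^\top,\dots,g_m^\top$ are linearly independent by Assumption~\ref{rank_assmp_G}. You instead abstract away from the structure entirely: you use only the fact (stated in Theorem~\ref{MIMO_FL_thm}) that $(\mathcal{A},\mathcal{B})$ is controllable, together with the range identity $\mathrm{Im}(\mathcal{B}\mathcal{G})=\mathrm{Im}(\mathcal{B})$ that follows from surjectivity of $\mathcal{G}$, and conclude via the PBH test (or equivalently via invariance of the reachable subspace $\sum_{j}\mathcal{A}^{j}\,\mathrm{Im}(\mathcal{B})$). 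Your argument is shorter, avoids the bookkeeping of verifying row independence across overlapping column blocks, and proves the more general statement that post-multiplying the input matrix of \emph{any} controllable pair by a surjective matrix preserves controllability; what the paper's computation buys in exchange is an explicit description of the controllability matrix of $(\mathcal{A},\mathcal{BG})$ in terms of the rows of $\mathcal{G}$, which makes the role of the Brunovsky indices $d_i$ visible. Both proofs consume Assumption~\ref{rank_assmp_G} at the same essential point, so there is no gap in your reasoning.
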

	\begin{proof}
		See Appendix \ref{app_controllable_pair}.
	\end{proof}%
	Systems that are in the block-Brunovsky canonical form as in \eqref{BINF} have two appealing properties that will be extensively used throughout the paper. First, the state $\Xi_k$, and its noisy counterpart $\tilde{\Xi}_k = \Xi_k+\omega_k$, are defined as the shifted outputs as in \eqref{Xi}. Second, the system is input-output decoupled from $\mathbf{v}_k$ to $\mathbf{y}_k$. This means that the $i-$th synthetic input at time $k$ is equal to the $i-$th (noiseless) output at time $k+d_i$, i.e.,
		\begin{align}
			\hspace{-1mm}y_{i,k+d_i}\hspace{-1mm} \stackrel{\eqref{output_k}, \eqref{definition_of_v_and_Phi}}{=}\hspace{-0.5mm} v_{i,k} \hspace{-0.5mm}&= \hspace{-0.5mm} \phi_i(\mathbf{u}_k,\Xi_k)\hspace{-1mm}\stackrel{\eqref{basis},\eqref{imp_defs}}{=}\hspace{-1mm} g_i^\top\hspace{-1mm}\left(\hspace{-0.5mm}\hat{\Psi}_k(\mathbf{u},{{\Xi}})\hspace{-1mm}+\hspace{-1mm}\hat{E}_k(\mathbf{u},{{\Xi}})\hspace{-0.5mm}\right)\hspace{-1mm}\notag\\
			&\stackrel{\eqref{noisy_basis}}{=}g_i^\top\hspace{-1.5mm}\left(\hat{\Psi}_k(\mathbf{u},{\tilde{\Xi}})\hspace{-1mm}+\hspace{-1mm}\hat{E}_k(\mathbf{u},{\tilde{\Xi}}) \hspace{-1mm}+\hspace{-1mm} \hat{D}_k(\omega)\right).\label{y_for_proofs}
		\end{align}
	\indent In the following theorem, we extend the results of \cite{Willems05} to the class of DT-MIMO full-state feedback linearizable nonlinear systems. Theorem \ref{DB_MIMO_rep} studies the {nominal case} for which the approximation error in \eqref{basis} is zero and the data is noiseless. For this case, \eqref{Lsys3} reduces to
	\begin{equation}
		\Xi_{k+1} = \mathcal{A}\Xi_k + \mathcal{B}\mathcal{G}\hat{\Psi}_k(\mathbf{u},\Xi), \qquad \mathbf{y}_k = \mathcal{C}\Xi_k.\label{nominal_Lsys}
	\end{equation}
	The case for which the basis functions approximation errors and output noise are nonzero is studied in Section \ref{sim_om_sec}.
	\begin{theorem}\label{DB_MIMO_rep}
		Suppose Assumptions \ref{MIMO_img_assmp}--\ref{rank_assmp_G} are satisfied and let $\{\mathbf{u}_k\}_{k=0}^{N-1}$, $\{y_{i,k}\}_{k=0}^{N+d_i-1}$, for $i\in\mathbb{Z}_{[1,m]}$, be a trajectory of a full-state feedback linearizable system as in \eqref{NLsys}. Furthermore, let $\{\hat{\Psi}_k(\mathbf{u},\Xi)\}_{k=0}^{N-1}$ from \eqref{nominal_Lsys} be persistently exciting of order $L+n$. Then, any $\{\mathbf{\bar{u}}_k\}_{k=0}^{L-1}$, $\{\bar{y}_{i,k}\}_{k=0}^{L+d_i-1}$ is a trajectory of system \eqref{NLsys} if and only if there exists $\alpha\in\mathbb{R}^{N-L+1}$ such that the following holds
		\begin{equation}
			\hspace{-3mm}\begin{bmatrix}\hspace{-0.5mm}
				H_{L}(\hat{\Psi}(\mathbf{u},\Xi))\\ 	H_{L+1}(\Xi)\end{bmatrix}\hspace{-0.5mm}\alpha \hspace{-0.25mm}=\hspace{-0.25mm} \begin{bmatrix}
				\hat{\Psi}(\mathbf{\bar{u}},\bar{\Xi})\\ 	\bar{\Xi}
			\end{bmatrix}\hspace{-1mm},
			\label{inexact_result}
		\end{equation}
		where $\hat{\Psi}(\mathbf{\bar{u}},\bar{\Xi})$ is the stacked vector of the sequence $\{\hat{\Psi}_k(\mathbf{\bar{u}},\bar{\Xi})\}_{k=0}^{L-1}$, while $\Xi,\,\bar{\Xi}$ are the stacked vectors of $\{\Xi_k\}_{k=0}^{N},\,\{\bar{\Xi}_k\}_{k=0}^{L}$ which, according to \eqref{Xi}, are composed of $\{y_{i,k}\}_{k=0}^{N+d_i-1},\,\{\bar{y}_{i,k}\}_{k=0}^{L+d_i-1}$, respectively.
	\end{theorem}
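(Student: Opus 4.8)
The plan is to recognize that, in the nominal case, system \eqref{nominal_Lsys} is an ordinary controllable LTI system whose ``input'' is the basis-function vector $\hat{\Psi}_k(\mathbf{u},\Xi)\in\mathbb{R}^r$ and whose state $\Xi_k\in\mathbb{R}^n$ is, by \eqref{Xi}, composed of shifted outputs. Hence the result should follow by applying the input-state version of Willems' Fundamental Lemma (cf.\ \cite{Willems05,Berberich20,vanWaarde20}) to the pair $(\mathcal{A},\mathcal{BG})$, which is controllable by Lemma \ref{lemma_controllable_pair}. Before invoking it, I would first establish the key equivalence: a sequence $\{\bar{\mathbf{u}}_k\}_{k=0}^{L-1}, \{\bar{y}_{i,k}\}_{k=0}^{L+d_i-1}$ is a trajectory of \eqref{NLsys} if and only if the associated pair $(\hat{\Psi}(\bar{\mathbf{u}},\bar{\Xi}),\bar{\Xi})$ is an input-state trajectory of \eqref{nominal_Lsys}. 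This follows from Theorem \ref{MIMO_FL_thm}: since $T$ and $\gamma$ are invertible, trajectories of \eqref{NLsys} are in bijection with those of \eqref{BINF}, and in the nominal case relation \eqref{y_for_proofs} reduces to $y_{i,k+d_i}=g_i^\top\hat{\Psi}_k(\mathbf{u},\Xi)$, i.e.\ the block-Brunovsky dynamics \eqref{nominal_Lsys} hold exactly along any output trajectory of the nonlinear system, with the initial state $\Xi_0=T(\mathbf{x}_0)$ ranging freely over $\mathbb{R}^n$ as $\mathbf{x}_0$ does.

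With this reduction, both directions become the standard argument. For the ``if'' direction, I note that each column of $[H_L(\hat{\Psi}(\mathbf{u},\Xi))^\top\ H_{L+1}(\Xi)^\top]^\top$ is, by construction, a genuine length-$L$ input-state trajectory of the linear system \eqref{nominal_Lsys}; since the trajectories of a linear system form a vector space, any linear combination indexed by $\alpha$ is again such a trajectory, so the right-hand side of \eqref{inexact_result} is an input-state trajectory of \eqref{nominal_Lsys}, and by the equivalence above the corresponding $(\bar{\mathbf{u}},\bar{y})$ is a trajectory of \eqref{NLsys}. For the ``only if'' direction, I would count dimensions: the set of length-$L$ input-state trajectories of \eqref{nominal_Lsys} is parameterized by the free initial state $\Xi_0\in\mathbb{R}^n$ and the free input window $\{\hat{\Psi}_k\}_{k=0}^{L-1}\in\mathbb{R}^{rL}$, hence has dimension $rL+n$. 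The column space of the data matrix is contained in this space, and the crux is to show it fills it, i.e.\ that $\mathrm{rank}\,[H_L(\hat{\Psi}(\mathbf{u},\Xi))^\top\ H_{L+1}(\Xi)^\top]^\top = rL+n$; note that the extra state rows beyond $\Xi_0$ are determined by $\Xi_0$ and the inputs, so they neither raise nor lower this rank.

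This rank equality is exactly where persistency of excitation enters and is the main technical obstacle. Using that $\{\hat{\Psi}_k(\mathbf{u},\Xi)\}$ is PE of order $L+n$ together with controllability of $(\mathcal{A},\mathcal{BG})$ from Lemma \ref{lemma_controllable_pair}, I would invoke the fundamental-lemma rank result for controllable LTI systems to conclude that the data matrix attains full row rank $rL+n$, so that its column space coincides with the full trajectory space. Consequently any input-state trajectory of \eqref{nominal_Lsys} --- equivalently, by the reduction, any input-output trajectory of \eqref{NLsys} --- admits an $\alpha$ satisfying \eqref{inexact_result}, which completes the proof. The one point requiring care is that persistency of excitation is imposed on the transformed, nonlinear regressor $\hat{\Psi}$ rather than on $\mathbf{u}$ itself; this is precisely what lets the classical lemma apply verbatim to the surrogate linear system in $(\hat{\Psi},\Xi)$, so that no further nonlinear argument is needed for this nominal statement.
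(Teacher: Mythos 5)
Your proposal is correct and follows essentially the same route as the paper: reduce to the surrogate controllable LTI system in $(\hat{\Psi}_k(\mathbf{u},\Xi),\Xi_k)$ via Theorem \ref{MIMO_FL_thm} and Lemma \ref{lemma_controllable_pair}, then invoke the classical input-state fundamental lemma under PE of the regressor $\hat{\Psi}$. The only (cosmetic) divergence is in handling the trailing block: the paper applies the lemma with the $L$-deep state Hankel matrix and then explicitly derives the extra row $\bar{\Xi}_L=H_1(\Xi_{[L,N]})\alpha$ from the relation $y_{i,k+d_i}=g_i^\top\hat{\Psi}_k(\mathbf{u},\Xi)$ in \eqref{y_for_proofs}, whereas you absorb it into a single rank/dimension count on the $(L+1)$-deep matrix, correctly noting that the extra state rows are linearly dependent on $\Xi_0$ and the inputs and so leave the rank at $rL+n$.
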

	\begin{proof}
		According to Theorem \ref{MIMO_FL_thm}, $\{\hat{\Psi}_k(\mathbf{\bar{u}},\bar{\Xi})\}_{k=0}^{L-1}$, $\{\bar{y}_{i,k}\}_{k=0}^{L+d_i-1}$ is an input-output trajectory of \eqref{nominal_Lsys} if and only if $\{\mathbf{\bar{u}}_k\}_{k=0}^{L-1},\,\{\bar{y}_{i,k}\}_{k=0}^{L+d_i-1}$ is an input-output trajectory of \eqref{NLsys}.\par
		Using the input-output trajectory $\{\mathbf{u}_k\}_{k=0}^{N-1}$, $\{y_{i,k}\}_{k=0}^{N+d_i-1}$ of system \eqref{NLsys}, one can construct the following sequences $\{\hat{\Psi}_k(\mathbf{u},\Xi)\}_{k=0}^{N-1}$ and $\{\Xi_k\}_{k=0}^{N}$, which correspond to an input-state trajectory \eqref{nominal_Lsys}. Since $\{\hat{\Psi}_k(\mathbf{u},\Xi)\}_{k=0}^{N-1}$ is persistently exciting of order $L+n$ by assumption, and the pair $(\mathcal{A},\mathcal{B}\mathcal{G})$ is controllable by Lemma \ref{lemma_controllable_pair}, then \cite{Willems05} shows that any $\{\hat{\Psi}_k(\mathbf{\bar{u}},\bar{\Xi})\}_{k=0}^{L-1},\,\{\bar{\Xi}_{k}\}_{k=0}^{L-1}$ is an input-state trajectory of \eqref{nominal_Lsys} if and only if there exists $\alpha\in\mathbb{R}^{N-L+1}$ such that
		\begin{equation}
			\hspace{-3mm}\begin{bmatrix}\hspace{-0.5mm}
				H_{L}(\hat{\Psi}(\mathbf{u},\Xi))\\ H_{L}(\Xi_{[0,N-1]})\end{bmatrix}\hspace{-0.5mm}\alpha \hspace{-0.25mm}=\hspace{-0.25mm} \begin{bmatrix}
				\hat{\Psi}(\mathbf{\bar{u}},\bar{\Xi})\\ \bar{\Xi}_{[0,L-1]}
			\end{bmatrix}\hspace{-1mm}.
			\label{NL_incomplete_willems_lemma}
		\end{equation}
		Next, notice from \eqref{y_for_proofs} that for each output $\bar{y}_i$, for $i\in\mathbb{Z}_{[1,m]}$, the following holds
		\begin{align}
			\hspace{0mm}\bar{y}_{i,[L,L+d_i-1]}\label{last_di_outputs}&\hspace{2mm}\stackrel{\eqref{y_for_proofs}}{=} (I_{d_i}\otimes g_i^\top)\hat{\Psi}_{[L-d_i,L-1]}(\mathbf{\bar{u}},\bar{\Xi})\\
			&\hspace{2mm}\stackrel{\eqref{NL_incomplete_willems_lemma}}{=} (I_{d_i}\otimes g_i^\top) H_{d_i}(\hat{\Psi}_{[L-d_i,N-1]}(\mathbf{u},{\Xi}))\alpha\notag\\
			&\hspace{2mm}\stackrel{\eqref{y_for_proofs}}{=} H_{d_i}\left(y_{i,[L,N+d_i-1]}\right)\alpha.\notag
		\end{align}
		Furthermore, the state $\bar{\Xi}_L$ can be written as
		\begin{align}
			\bar{\Xi}_L &\stackrel{\eqref{Xi}}{=} \begin{bmatrix}
				\bar{y}_{1,[L,L+d_1-1]} \\ \vdots \\ \bar{y}_{m,[L,L+d_m-1]}
			\end{bmatrix} \stackrel{\eqref{last_di_outputs}}{=} \begin{bmatrix}
			H_{d_1}(y_{1,[L,N+d_1-1]})\\ \vdots \\ H_{d_m}(y_{m,[L,N+d_m-1]})
		\end{bmatrix}\alpha\notag\\
		& = H_1(\Xi_{[L,N]})\alpha.\label{last_xi}
		\end{align}
	Finally, concatenating \eqref{last_xi} with \eqref{NL_incomplete_willems_lemma} results in \eqref{inexact_result} which completes the proof.
	\end{proof}
	Theorem \ref{DB_MIMO_rep} provides a purely data-based representation of full-state feedback linearizable systems. In particular, each input-output trajectory $\{\mathbf{\bar{u}}_k\}_{k=0}^{L-1},\{\bar{y}_{i,k}\}_{k=0}^{L+d_i-1}$, for $i\in\mathbb{Z}_{[1,m]}$, can be parameterized via \eqref{inexact_result} using a priori collected data $\{\mathbf{u}_k\}_{k=0}^{N-1},\{y_{i,k}\}_{k=0}^{N+d_i-1}$. The setting of Theorem \ref{DB_MIMO_rep} is an idealization that may not be satisfied in practice due to non-zero errors ${\scalebox{1.5}{$\epsilon$}}(\mathbf{u}_k,\tilde{\Xi}_k)$ and ${\delta}(\omega_k)$ in \eqref{noisy_basis}. Moreover, the persistency of excitation condition of $\{\hat{\Psi}_k(\mathbf{u},\Xi)\}_{k=0}^{N-1}$ can only be checked after collecting the input-output data but cannot be enforced a priori by a suitable design of $\mathbf{u}$. This is because the basis functions depend not only on the input but on the output as well. For the above reasons, we provide in the next section constructive methods to approximately solve the simulation and output-matching control problems in a data-based fashion and without requiring persistency of excitation. Furthermore, we provide qualitative error bounds on the difference between the estimated and true outputs and show that these errors tend to zero if \textbf{(i)} the noise in the data as well as the basis function approximation error tend to zero and \textbf{(ii)} persistency of excitation condition is satisfied.
	\section{Data-based Simulation \& Output-Matching}\label{sim_om_sec}
In this section, we investigate the data-based simulation and output-matching control problems for the class of DT-MIMO feedback linearizable nonlinear systems \eqref{NLsys}. For the nominal setting, where the basis function expansion in \eqref{basis} is exact and the output data is noiseless, the data-based simulation and output matching problems can be formulated in a similar manner as in \cite[Propositions 2 and 3]{AlsaltiBerLopAll2021}. For space reasons, we skip the nominal case here and consider the practically more relevant setting where $\scalebox{1.5}{$\epsilon$}(\mathbf{u}_k,\tilde{\Xi}_k)\not\equiv0, \delta(\omega_k)\not\equiv0$ in~\eqref{noisy_basis}. To do so, in the following we restrict our analysis to a compact subset of the input-state space $\Omega\subset\mathbb{R}^m\times\mathbb{R}^n$, i.e., we assume from here on that the a priori collected input and output trajectories as well as the simulated/matched trajectories evolve in the set $\Omega$. This, along with local Lipschitz continuity\footnote{According to \eqref{definition_of_v_and_Phi} and the discussion below it, $\Phi$ is the iterated composition of the continuously differentiable functions $\boldsymbol{f}$, $\boldsymbol{h}$ and $T^{-1}$ (which is continuously differentiable by \eqref{output_k} and \eqref{Xi}). Hence, $\Phi$ is locally Lipschitz continuous.} of $\Phi$ and the chosen basis functions, guarantees a uniform upper bound on the approximation error $\scalebox{1.5}{$\epsilon$}(\mathbf{u}_k,{{\Xi}_k})$ for all $(\mathbf{u}_k,{\Xi}_k)\in\Omega$. This assumption is summarized as follows.
	\begin{assumption}\label{bounded_err_assmp}
		The error in the basis function approximation $\hat{\scalebox{1.5}{$\epsilon$}}_k(\mathbf{u},{{\Xi}})$ is uniformly upper bounded by $\varepsilon^*>0$, i.e., $\norm*{\hat{\scalebox{1.5}{$\epsilon$}}_k(\mathbf{u},{{\Xi}})}_\infty\leq\varepsilon^*$, for all $(\mathbf{u}_k,{\Xi}_k)\in\Omega\subset\mathbb{R}^m\times\mathbb{R}^n$, where $\Omega$ is a compact subset of the input-state space.
\end{assumption}%
\noindent Since $\hat{E}_k(\mathbf{u},{\Xi})\coloneqq \mathcal{G}^\dagger\hat{\scalebox{1.5}{$\epsilon$}}_k(\mathbf{u},{\Xi})$, Assumption~\ref{bounded_err_assmp} implies
	\begin{equation}
		\hspace{0mm}
			\norm{\hat{E}_k(\mathbf{u},{\Xi})}_\infty\hspace{-1mm} \leq\norm{\mathcal{G}^\dagger}_\infty\norm{\hat{\scalebox{1.5}{$\epsilon$}}_k(\mathbf{u},{\Xi})}_\infty\hspace{-1mm}\leq\norm{\mathcal{G}^\dagger}_\infty\varepsilon^*\hspace{-1mm}.
\label{tilde_E_bdd_asmp}
	\end{equation}
\begin{remark}\label{Lipschitz_constants}
	\textup{\textbf{(a)}} We denote the Lipschitz constant of $\Phi$ w.r.t. $\Xi$ in the compact set $\Omega$ by $K_\Xi$. \textup{\textbf{(b)}} The function $\delta(\omega)$ in \eqref{noisy_basis} satisfies $\delta(\mathbf{0})=\mathbf{0}$ and, by local Lipschitz continuity of $\Phi$ and $\Psi$ w.r.t $\Xi$ on the compact set $\Omega$ and boundedness of $\mathbf{w}_k$, there exists a $K_w>0$ such that $\norm*{\delta(\omega_k)}_\infty\leq K_w w^*$ for all $k\geq0$.
\end{remark}
\subsection{Data-based simulation}\label{sim_sec}
The data-based simulation problem is defined as follows.
\begin{definition}\label{DD_sim_def}
	\textup{\textbf{Data-based simulation}\cite{Rapisarda08}\textbf{:}}  Given an input $\mathbf{\bar{u}}$ and initial conditions $\bar{\mathbf{x}}_0$ for the nonlinear system in \eqref{NLsys}, find the corresponding output trajectory $\mathbf{\bar{y}}$ using only input-output data, i.e., without explicitly identifying a model of the system.
\end{definition}
\indent In the following theorem, we solve a minimization problem for $\alpha$ in contrast to solving a set of nonlinear equations in \eqref{inexact_result}. Once a solution is obtained, an approximate output trajectory is found and its difference from the true simulated output is shown to be bounded.\par
In what follows, we use $\hat{\Psi}(\mathbf{u},\tilde{\Xi})$ to denote the stacked vector of $\{\hat{\Psi}_k(\mathbf{u},\tilde{\Xi})\}_{k=0}^{N-1}$. Moreover, for some input to be simulated $\bar{\mathbf{u}}$ and some vector $\alpha\in\mathbb{R}^{N-L+1}$, we use $\hat{\Psi}(\mathbf{\bar{u}},H_{L+1}(\tilde{\Xi})\alpha)$ to denote the stacked vector of the sequence $\{\hat{\Psi}_k(\mathbf{\bar{u}},H_{L+1}(\tilde{\Xi})\alpha)\}_{k=0}^{L-1}$, with each element defined as $\hat{\Psi}_k(\mathbf{\bar{u}},H_{L+1}(\tilde{\Xi})\alpha)\coloneqq\Psi(\mathbf{\bar{u}}_k,H_1(\tilde{\Xi}_{[k,k+N-L]})\alpha)$.
\begin{theorem}\label{MIMO_inexact_sim_thm}
		Suppose Assumptions \ref{MIMO_img_assmp}-\ref{bounded_err_assmp} are satisfied and let $\{\mathbf{u}_k\}_{k=0}^{N-1}$, $\{\tilde{y}_{i,k}\}_{k=0}^{N+d_i-1}$, for $i\in\mathbb{Z}_{[1,m]}$, be input-output data sequences collected from \eqref{NLsys}. Furthermore, let \(\{\mathbf{\bar{u}}_k\}_{k=0}^{L-1}\) be a new input to be simulated with \(\bar{\Xi}_0=\begin{bmatrix}
			\bar{y}_{1,[0,d_1-1]}^\top \hspace{-0.5mm}& \hspace{-0.5mm}\dots \hspace{-0.5mm}& \hspace{-0.5mm}\bar{y}_{m,[0,d_m-1]}^\top
		\end{bmatrix}^\top\) specifying initial conditions for the state $\bar{\Xi}$ in \eqref{Lsys3}. Let the following optimization problem be feasible for the given $\bar{\Xi}_0$
		\begin{subequations}
			\begin{align}
				\hspace{-2mm}\alpha^{*}\in&\argmin\limits_{\alpha} J(\alpha)\hspace{-0.5mm}\coloneqq\hspace{-0.5mm} \norm*{\mathcal{H}}_2^2 \hspace{-0.5mm}+\hspace{-0.5mm} \lambda\max\{\hspace{-0.25mm}\varepsilon^*\hspace{-0.75mm},w^*\hspace{-0.5mm}\}\norm*{\alpha}_2^2,\label{alpha*_a}\\
				&\textup{s.t. } \bar{\Xi}_0 = H_1(\tilde{\Xi}_{[0,N-L]})\alpha,\label{alpha*_b}
			\end{align}\label{alpha*}%
		\end{subequations}%
		where $\lambda>0$, $\mathcal{H}\hspace{-0.5mm}\coloneqq
		H_L(\hat{\Psi}(\mathbf{u},\tilde{\Xi}))
		\alpha - 
		\hat{\Psi}(\mathbf{\bar{u}},H_{L+1}(\tilde{\Xi})\alpha)$. Then, the estimated simulated outputs are given by $\hat{y}_{i,[0,L+d_i-1]} = H_{L+d_i}(\tilde{y}_i)\alpha^*$, for $i\in\mathbb{Z}_{[1,m]}$, and the error ${e}_{i}\vcentcolon=\bar{y}_{i}-{\hat{y}}_{i}$ satisfies $e_{i,[0,d_i-1]}=\mathbf{0}$ and is upper bounded by
		\begin{align}
			|{e}_{i,k+d_i}|&\leq \mathcal{P}^{k}(K_\Xi)\big( \varepsilon^*(1+\norm*{\alpha^*}_1)  + \norm*{\mathcal{G}}_{\infty}\sqrt{b}\notag\\
			&\qquad\qquad\quad+w^*(1+K_w)\norm*{\alpha^*}_1\big),\label{err_sim}
		\end{align}
		for all $k\in\mathbb{Z}_{[0,L-1]}$, where $K_\Xi$ and $K_w$ are defined in Remark~\ref{Lipschitz_constants}, $b=J(\alpha^*)-\lambda\max\{\varepsilon^*,w^*\}\norm*{\alpha^*}_2^2$ and $\mathcal{P}^k(K_\Xi)=(K_\Xi)^k + (K_\Xi)^{k-1} + \dots + K_\Xi + 1$.
\end{theorem}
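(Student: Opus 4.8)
The plan is to track the error between the true simulated state $\bar{\Xi}_k$ and the data-based reconstruction $\hat{\Xi}_k \coloneqq H_1(\tilde{\Xi}_{[k,k+N-L]})\alpha^*$, whose $i$-th block is exactly $\hat{y}_{i,[k,k+d_i-1]}$ by the Hankel structure of \eqref{Xi}. First I would settle the base case: the $i$-th block of the constraint \eqref{alpha*_b} reads $\bar{y}_{i,[0,d_i-1]} = H_{d_i}(\tilde{y}_i)\alpha^*$, and since the same Hankel rows define $\hat{y}_{i,[0,d_i-1]} = H_{d_i}(\tilde{y}_i)\alpha^*$, feasibility of \eqref{alpha*} immediately yields $e_{i,[0,d_i-1]} = \mathbf{0}$, i.e.\ $\bar{\Xi}_0 = \hat{\Xi}_0$.

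For the propagation, I would derive a one-step identity for each new output. Using the decoupling relation \eqref{y_for_proofs} on the recorded data, each column entry obeys $\tilde{y}_{i,\kappa+d_i} = g_i^\top(\hat{\Psi}_\kappa(\mathbf{u},\tilde{\Xi}) + \hat{E}_\kappa(\mathbf{u},\tilde{\Xi}) + \hat{D}_\kappa(\omega)) + w_{i,\kappa+d_i}$, so that multiplying $H_{L+d_i}(\tilde{y}_i)$ by $\alpha^*$ and reading off row $k+d_i$ gives
\begin{equation*}
\hat{y}_{i,k+d_i} = g_i^\top\big(\mathcal{S}_k^\Psi + \mathcal{S}_k^E + \mathcal{S}_k^D\big) + \mathcal{S}_k^w,
\end{equation*}
where $\mathcal{S}_k^\Psi$ is the $k$-th block of $H_L(\hat{\Psi}(\mathbf{u},\tilde{\Xi}))\alpha^*$ and $\mathcal{S}_k^E,\mathcal{S}_k^D,\mathcal{S}_k^w$ are the corresponding $\alpha^*$-weighted sums of $\hat{E},\hat{D},w$. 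The crucial step is to replace $\mathcal{S}_k^\Psi$ using the optimizer: by the definition of $\mathcal{H}$ and of $\hat{\Psi}_k(\bar{\mathbf{u}},H_{L+1}(\tilde{\Xi})\alpha^*) = \Psi(\bar{\mathbf{u}}_k,\hat{\Xi}_k)$, the $k$-th block yields $\mathcal{S}_k^\Psi = \Psi(\bar{\mathbf{u}}_k,\hat{\Xi}_k) + \mathcal{H}_k$ with $\norm*{\mathcal{H}}_2 = \sqrt{b}$. Combining with $\bar{y}_{i,k+d_i} = \phi_i(\bar{\mathbf{u}}_k,\bar{\Xi}_k) = g_i^\top\Psi(\bar{\mathbf{u}}_k,\bar{\Xi}_k) + \varepsilon_i(\bar{\mathbf{u}}_k,\bar{\Xi}_k)$ from \eqref{basis}, and writing $g_i^\top\Psi(\bar{\mathbf{u}}_k,\hat{\Xi}_k) = \phi_i(\bar{\mathbf{u}}_k,\hat{\Xi}_k) - \varepsilon_i(\bar{\mathbf{u}}_k,\hat{\Xi}_k)$, the error becomes
\begin{align*}
e_{i,k+d_i} &= \big(\phi_i(\bar{\mathbf{u}}_k,\bar{\Xi}_k) - \phi_i(\bar{\mathbf{u}}_k,\hat{\Xi}_k)\big) + \varepsilon_i(\bar{\mathbf{u}}_k,\hat{\Xi}_k)\\
&\quad - g_i^\top\mathcal{H}_k - g_i^\top\mathcal{S}_k^E - g_i^\top\mathcal{S}_k^D - \mathcal{S}_k^w.
\end{align*}

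Next I would bound each contribution. The first bracket is controlled by the Lipschitz constant of $\Phi$ from Remark \ref{Lipschitz_constants}(a): it is at most $K_\Xi\norm*{\bar{\Xi}_k - \hat{\Xi}_k}_\infty$. The term $\varepsilon_i(\bar{\mathbf{u}}_k,\hat{\Xi}_k)$ is at most $\varepsilon^*$ by Assumption \ref{bounded_err_assmp}. Using $\mathcal{G}\mathcal{G}^\dagger = I_m$ (Assumption \ref{rank_assmp_G}) one has $g_i^\top\hat{E} = \hat{\varepsilon}_i$ and $g_i^\top\hat{D} = \hat{\delta}_i$, whence $|g_i^\top\mathcal{S}_k^E| \le \varepsilon^*\norm*{\alpha^*}_1$ and, by Remark \ref{Lipschitz_constants}(b), $|g_i^\top\mathcal{S}_k^D| \le K_w w^*\norm*{\alpha^*}_1$, while $|\mathcal{S}_k^w| \le w^*\norm*{\alpha^*}_1$; the latter two combine into $w^*(1+K_w)\norm*{\alpha^*}_1$ and the former two together with $\varepsilon_i$ into $\varepsilon^*(1+\norm*{\alpha^*}_1)$. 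Finally $|g_i^\top\mathcal{H}_k| \le \norm*{g_i}_1\norm*{\mathcal{H}_k}_\infty \le \norm*{\mathcal{G}}_\infty\sqrt{b}$. Denoting by $\beta$ the sum $\varepsilon^*(1+\norm*{\alpha^*}_1) + \norm*{\mathcal{G}}_\infty\sqrt{b} + w^*(1+K_w)\norm*{\alpha^*}_1$ of these three $k$-independent terms, I obtain the one-step estimate $|e_{i,k+d_i}| \le K_\Xi\norm*{\bar{\Xi}_k - \hat{\Xi}_k}_\infty + \beta$.

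The final step is a strong induction on the generation time. Since the entries of $\bar{\Xi}_k - \hat{\Xi}_k$ are the errors $e_{i,k+j}$ for $0 \le j \le d_i-1$, each such entry is either an initial-condition component (hence zero) or an output generated at an earlier step $k+j-d_i \le k-1$; by the inductive hypothesis it is bounded by $\mathcal{P}^{k-1}(K_\Xi)\beta$, so $\norm*{\bar{\Xi}_k - \hat{\Xi}_k}_\infty \le \mathcal{P}^{k-1}(K_\Xi)\beta$. Substituting into the one-step estimate and using $K_\Xi\mathcal{P}^{k-1}(K_\Xi) + 1 = \mathcal{P}^{k}(K_\Xi)$ closes the induction and yields \eqref{err_sim}, with the base case $k=0$ following from $\bar{\Xi}_0 = \hat{\Xi}_0$. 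The main obstacle I anticipate is the Hankel/optimizer bookkeeping in the second paragraph, namely correctly identifying row $k+d_i$ of $H_{L+d_i}(\tilde{y}_i)\alpha^*$ with the block structure of $H_L(\hat{\Psi}(\mathbf{u},\tilde{\Xi}))\alpha^*$ and matching it against the nonlinear reconstruction $\Psi(\bar{\mathbf{u}}_k,\hat{\Xi}_k)$ hidden inside $\mathcal{H}$, since this is where the approximation and noise structure must be isolated exactly so that the residual collapses to $\norm*{\mathcal{H}}_2 = \sqrt{b}$; a secondary technical point is ensuring the reconstructed arguments $(\bar{\mathbf{u}}_k,\hat{\Xi}_k)$ remain in $\Omega$ so that Assumption \ref{bounded_err_assmp} applies.
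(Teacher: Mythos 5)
Your proposal is correct and follows essentially the same route as the paper's proof: the same decomposition of $\hat{y}_{i,k+d_i}$ via \eqref{y_for_proofs} and the optimality residual (your $\mathcal{H}_k$ is the paper's vector $c_k$ with $c^\top c = b$), the same term-by-term bounds using $g_i^\top\mathcal{G}^\dagger = e_i^\top$, Assumption \ref{bounded_err_assmp} and Remark \ref{Lipschitz_constants}, and the same strong induction exploiting that the entries of $\bar{\Xi}_k-\hat{\Xi}_k$ are earlier error instances. The secondary point you flag about $(\bar{\mathbf{u}}_k,\hat{\Xi}_k)$ remaining in $\Omega$ is handled in the paper only by the standing assumption (stated before Assumption \ref{bounded_err_assmp}) that all simulated trajectories evolve in $\Omega$.
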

\begin{proof}
		Let $\hat{\Xi}\coloneqq H_{L+1}(\tilde{\Xi})\alpha^*$. By definition of $\alpha^*$ from \eqref{alpha*} and for some vector $c$ satisfying $c^\top c=b$, it holds that
		\begin{equation}
			H_L(\hat{\Psi}(\mathbf{u},\tilde{\Xi}))\alpha^* = \hat{\Psi}(\mathbf{\bar{u}},\hat{\Xi})+ c.\label{sim_opt}
		\end{equation}
		The constraint in \eqref{alpha*_b} fixes the initial estimated state $\hat{\Xi}_0 \coloneqq H_1(\tilde{\Xi}_{[0,N-L]})\alpha^* = \bar{\Xi}_0$ and hence, $e_{i,[0,d_i-1]}=\bar{y}_{i,[0,d_i-1]}-\hat{y}_{i,[0,d_i-1]} = \mathbf{0}$ (cf. \eqref{Xi}). Furthermore, each estimated simulated output takes the form $\hat{y}_{i,k+d_i}=H_1(\tilde{y}_{i,[k+d_i,k+d_i+N-L]})\alpha^*$, for all $k\in\mathbb{Z}_{[0,L-1]}$. Therefore, one can write
		\begin{align}
			&\hat{y}_{i,k+d_i}\label{proof_sim}\\
			&=H_1({y}_{i,[k+d_i,k+d_i+N-L]})\alpha^* + H_1(w_{i,[k+d_i,k+d_i+N-L]})\alpha^*\notag
			\\&\stackrel{\eqref{y_for_proofs}}{=}g_i^\top H_1\Big(\hat{\Psi}_{[k,k+N-L]}(\mathbf{u},\tilde{\Xi})
			+\hat{E}_{[k,k+N-L]}(\mathbf{u},\tilde{\Xi})\notag\\
			&\quad+ \hat{D}_{[k,k+N-L]}(\omega)\Big)\alpha^*+ H_1(w_{i,[k+d_i,k+d_i+N-L]})\alpha^*\notag\\
			&\stackrel{\eqref{imp_defs}}{=} \hspace{-0.5mm}g_i^\top\hspace{-0.5mm} H_1\hspace{-0.5mm}(\hat{\Psi}_{[k,k+N-L]}(\mathbf{u},\tilde{\Xi}))\hspace{-0.25mm}\alpha^*\hspace{-0.5mm}+\hspace{-0.5mm} H_1\hspace{-0.5mm}(\hat{\varepsilon}_{i,[k,k+N-L]}(\mathbf{u},\tilde{\Xi}))\hspace{-0.25mm}\alpha^*\notag
			\\&\quad+ H_1(\hat{\delta}_{i,[k,k+N-L]}(\omega))\alpha^* + H_1(w_{i,[k+d_i,k+d_i+N-L]})\alpha^*\notag
			\\&\stackrel{\eqref{sim_opt}}{=} g_i^\top{\Psi}(\mathbf{\bar{u}}_{k},\hat{\Xi}_{k}) \hspace{-0.5mm}+ \hspace{-0.5mm}g_i^\top c_{k}\hspace{-0.5mm}+\hspace{-0.5mm} H_1\hspace{-0.5mm}(\hat{\varepsilon}_{i,[k,k+N-L]}(\mathbf{u},\tilde{\Xi}))\alpha^*\notag
			\\&\quad+ H_1(\hat{\delta}_{i,[k,k+N-L]}(\omega))\alpha^*+ H_1(w_{i,[k+d_i,k+d_i+N-L]})\alpha^*\notag
			\\&\stackrel{\eqref{basis}}{=}\phi_i(\mathbf{\bar{u}}_{k},\hat{\Xi}_{k}) - \varepsilon_i(\mathbf{\bar{u}}_{k},\hat{\Xi}_{k}) + g_i^\top c_{k}\notag%
			\\&\quad+\hspace{-1mm}H_1\hspace{-0.5mm}(\hat{\varepsilon}_{i,[k,k+N-L]}(\mathbf{u},\tilde{\Xi}))\hspace{-0.25mm}\alpha^*\hspace{-1mm}+\hspace{-1mm} H_1(w_{i,[k+d_i,k+d_i+N-L]})\alpha^*\hspace{-1mm}\notag\\
			&\quad+ H_1(\hat{\delta}_{i,[k,k+N-L]}(\omega))\alpha^*\notag,
		\end{align}
		where $c_{k}$ is the $k-$th entry of the vector $c$. The true, unknown output $\bar{y}_{i,k+d_i}$, for $i\in\mathbb{Z}_{[1,m]}$ and $k\in\mathbb{Z}_{[0,L-1]}$, can be written as $\bar{{y}}_{i,k+d_i} \stackrel{\eqref{y_for_proofs}}{=}\phi_i(\mathbf{\bar{u}}_{k},\bar{\Xi}_{k})$. Therefore, the error is expressed as
		\begin{align}
			e_{i,k+d_i} &= \phi_i(\mathbf{\bar{u}}_{k},\bar{\Xi}_{k}) - \phi_i(\mathbf{\bar{u}}_{k},\hat{\Xi}_{k}) + \varepsilon_i(\mathbf{\bar{u}}_{k},\hat{\Xi}_{k}) - g_i^\top c_{k}\notag\\
			&\, - H_1\hspace{-0.5mm}(\hat{\varepsilon}_{i,[k,k+N-L]}(\mathbf{u},\tilde{\Xi}))\hspace{-0.25mm}\alpha^*\hspace{-1mm}-\hspace{-1mm}H_1(\hat{\delta}_{i,[k,k+N-L]}(\omega))\alpha^*\notag\\
			&\, -H_1(w_{i,[k+d_i,k+d_i+N-L]})\alpha^*.\label{sim_e_vector}
		\end{align}
		The expression in \eqref{sim_e_vector} can be upper bounded by
		\begin{align}
			|{{e}_{i,k+d_i}}|\hspace{-0.25mm} &\leq\hspace{-0.25mm} K_\Xi \norm*{\bar{\Xi}_{k} \hspace{-0.25mm}-\hspace{-0.25mm} \hat{\Xi}_{k}}_\infty \hspace{-2.5mm}+\hspace{-0.25mm} \varepsilon^*(1\hspace{-0.25mm}+\hspace{-0.25mm}\norm*{\alpha^*}_1\hspace{-0.25mm}) \hspace{-0.25mm} + \hspace{-0.25mm}\norm*{\mathcal{G}}_{\infty}\hspace{-0.25mm}\sqrt{b}\notag
			\\&\quad+w^*(1+K_w)\norm*{\alpha^*}_1,\label{sim_bnd_b4_recursion}
		\end{align}
		where the first two terms in \eqref{sim_e_vector} were bounded by Lipschitz continuity of $\phi_i$ and the third and fifth terms were bounded by $\varepsilon^*(1+\norm*{\alpha^*}_1)$ following Assumption \ref{bounded_err_assmp}. The fourth term was bounded by $\norm*{g_i^\top}_\infty\norm{c_{k}}_{\infty}\leq\norm*{\mathcal{G}}_{\infty}\norm*{c}_\infty\leq\norm*{\mathcal{G}}_{\infty}\sqrt{c^\top c} = \norm*{\mathcal{G}}_{\infty}\sqrt{b}$, and the last two terms were bounded by $w^*(1+K_w)\norm*{\alpha^*}_1$ since $\norm*{\mathbf{w}}_\infty\leq w^*$ by assumption. We continue the proof by induction. Let $k=0$ in \eqref{sim_bnd_b4_recursion}, and notice that
	\begin{align}
		\hspace{-1mm}|{{e}_{i,d_i}}|\hspace{-0.5mm}&\leq\hspace{-0.5mm} \varepsilon^*\hspace{-0.25mm}(1\hspace{-0.5mm}+\hspace{-0.5mm}\norm*{\alpha^*}_1\hspace{-0.25mm}) \hspace{-0.5mm} + \hspace{-0.5mm}\norm*{\mathcal{G}}_{\hspace{-0.25mm}\infty}\hspace{-0.5mm}\sqrt{b} +w^*\hspace{-0.25mm}(1\hspace{-0.5mm}+\hspace{-0.5mm}K_w)\norm*{\alpha^*}_1\hspace{-0.25mm}\label{induction_base},
	\end{align}
		since $\bar{\Xi}_0 = \hat{\Xi}_0$ from \eqref{alpha*_b}. Notice that \eqref{induction_base} has the form \eqref{err_sim} with $\mathcal{P}^0(K_\Xi)=1$. For the induction step, let the following hold for all $k\in\mathbb{Z}_{[1,L-1]}$, all $\bar{k}\in\mathbb{Z}_{[0,k-1]}$ and $i\in\mathbb{Z}_{[1,m]}$
		\begin{align}
			|{e}_{i,\bar{k}+d_i}|&\leq \mathcal{P}^{\bar{k}}(K_\Xi)( \varepsilon^*(1+\norm*{\alpha^*}_1)  + \norm*{\mathcal{G}}_{\infty}\sqrt{b}\notag\\
			&\qquad\qquad\quad+w^*(1+K_w)\norm*{\alpha^*}_1),\label{induction_step1}
		\end{align}
		Since $\mathcal{P}^{\bar{k}}(K_\Xi)$ increases with increasing $\bar{k}$, this implies that the following bound on the previous error instances holds
		\begin{align}
			\hspace{-2mm}\norm*{e_{i,[0,k+d_i-1]}}_\infty \hspace{-0.5mm} &\leq \hspace{-0.75mm} \mathcal{P}^{(\hspace{-0.1mm}k\hspace{-0.1mm}-\hspace{-0.1mm}1\hspace{-0.1mm})}\hspace{-0.5mm}(\hspace{-0.15mm}K_\Xi\hspace{-0.15mm})\hspace{-0.5mm}(\hspace{-0.25mm} \varepsilon^*(1\hspace{-0.25mm}+\hspace{-0.25mm}\norm*{\alpha^*}_1\hspace{-0.25mm}) \hspace{-0.25mm} + \hspace{-0.25mm}\norm*{\mathcal{G}}_{\infty}\hspace{-0.25mm}\sqrt{b}\notag\\
			&\quad+w^*(1+K_w)\norm*{\alpha^*}_1).\label{induction_allk}
		\end{align}
		Then, by \eqref{sim_bnd_b4_recursion}, the definition of $\bar{\Xi}_k$ as in \eqref{Xi} and the corresponding $\hat{\Xi}_{k}\hspace{-1.25mm}=\hspace{-1.25mm}\begin{bmatrix}\hat{y}_{1,[k,k+d_1-1]}^\top\hspace{-1mm}&\hspace{-1mm}\cdots\hspace{-1mm}&\hspace{-1mm}\hat{y}_{m,[k,k+d_m-1]}^\top\end{bmatrix}^\top\hspace{-2mm}$, we have
		\begin{align}
			\label{induction_step2}
			|{{e}_{i,k+d_i}}| \hspace{-0.5mm}&\leq\hspace{-0.5mm} K_\Xi\norm*{\begin{bmatrix}
					e_{1,[k,k+d_1-1]} \\ \vdots \\ e_{m,[k,k+d_m-1]}
			\end{bmatrix}}_\infty\hspace{-3mm} +\varepsilon^*(1\hspace{-0.25mm}+\hspace{-0.25mm}\norm*{\alpha^*}_1\hspace{-0.25mm}) \hspace{-0.25mm}\\
			&\qquad\hspace{-0.25mm}+w^*(1+K_w)\norm*{\alpha^*}_1 + \hspace{-0.25mm}\norm*{\mathcal{G}}_{\infty}\hspace{-0.25mm}\sqrt{b}.\notag
		\end{align}
		Notice that the elements of the first term on the RHS of \eqref{induction_step2} are bounded by \eqref{induction_allk}. Therefore, we have that
		\begin{align*}
			|{{e}_{i,k+d_i}}| \hspace{-0.5mm}&\leq\hspace{-0.5mm} K_\Xi \left(\mathcal{P}^{(\hspace{-0.1mm}k\hspace{-0.1mm}-\hspace{-0.1mm}1\hspace{-0.1mm})}\hspace{-0.5mm}(\hspace{-0.15mm}K_\Xi\hspace{-0.15mm})(\varepsilon^*(1\hspace{-0.25mm}+\hspace{-0.25mm}\norm*{\alpha^*}_1\hspace{-0.25mm}) + \hspace{-0.25mm}\norm*{\mathcal{G}}_{\infty}\hspace{-0.25mm}\sqrt{b}\right.\\
			&\,+w^*(1+K_w)\norm*{\alpha^*}_1)\Big) +\varepsilon^*(1\hspace{-0.25mm}+\hspace{-0.25mm}\norm*{\alpha^*}_1\hspace{-0.25mm})\\
			&+ \hspace{-0.25mm}\norm*{\mathcal{G}}_{\infty}\hspace{-0.25mm}\sqrt{b}\,+w^*(1+K_w)\norm*{\alpha^*}_1.
		\end{align*}
		Collecting the terms in the last inequality results in \eqref{err_sim}, which completes the proof.
\end{proof}
{Theorem \ref{MIMO_inexact_sim_thm} provides an approximate solution to the data-based simulation problem when the data is noisy and the basis functions approximation error is unknown but uniformly upper bound as in Assumption \ref{bounded_err_assmp}. This was done by solving the optimization problem in \eqref{alpha*}. In particular, if \eqref{alpha*} is feasible (i.e., the constraint \eqref{alpha*_b} can be satisfied) then an approximate solution can be found for the simulation problem. Notice that persistency of excitation condition is \emph{not} necessary for feasibility of \eqref{alpha*_b} and hence is not necessarily required for Theorem \ref{MIMO_inexact_sim_thm}. In fact, feasibility of \eqref{alpha*} is given for any $\bar{\Xi}_0$ if $H_1(\tilde{\Xi}_{[0,N-L]})$ has full row rank, which can be easily verified given the a priori collected data $\{\tilde{\Xi}_k\}_{k=0}^{N}$ (which is given by $\{\tilde{y}_{i,k}\}_{k=0}^{N+d_i-1}$ for $i\in\mathbb{Z}_{[1,m]}$). Furthermore, Theorem \ref{MIMO_inexact_sim_thm} provides a bound on how far the estimated outputs $\hat{y}_{i,[0,L+d_i-1]}$ deviate from the actual system response $\bar{y}_{i,[0,L+d_i-1]}$.}\par
Since the solution of \eqref{alpha*} appears in the error bound, we incentivize solutions with smaller norm using a regularization term as in \eqref{alpha*} which depends on the basis functions approximation error and noise bounds $\varepsilon^*$ and $w^*$, respectively. It can be seen from \eqref{err_sim} that the error bound also depends on the unknown matrix of coefficients $\mathcal{G}$ in \eqref{basis}. Under certain assumptions, one can compute an upper bound on its norm in a model-free fashion (see \cite[Lemma 2]{Alsalti2021c}).\par
In general, the error bound \eqref{err_sim} can be conservative due to the use of Lipschitz arguments. In particular, the polynomial $\mathcal{P}^k(K_{\Xi})$ diverges (as $k\to\infty$) for $K_{\Xi}>1$ but converges to $\frac{1}{1-K_{\Xi}}$ for $K_{\Xi}<1$, in which case the error bound in \eqref{err_sim} is uniformly upper bounded for all $k$. In contrast, the actual error $e_{i,k}$ can potentially be (much) smaller than \eqref{err_sim}, depending on the quality of the collected data and the chosen basis functions.\par
The following lemma establishes the important qualitative result that if $\{\hat{\Psi}_k(\mathbf{u},\tilde{\Xi})\}_{k=0}^{N-1}$ is persistently exciting, then the error bound in \eqref{err_sim} (and hence the true error) goes to zero as $\max\{\varepsilon^*,w^*\}$ goes to zero. 
\begin{lemma}\label{lemma_eps_zero}
Let the conditions of Theorem \ref{MIMO_inexact_sim_thm} hold and let $\{\hat{\Psi}_k(\mathbf{u},\tilde{\Xi})\}_{k=0}^{N-1}$ be persistently exciting of order $L+n$. Then, as $\nu\coloneqq\max\{\varepsilon^*,w^*\}\to0$, the error $e_{i,[0,L+d_i-1]}\to0$.
\end{lemma}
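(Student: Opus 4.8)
The plan is to show that every term on the right-hand side of the error bound \eqref{err_sim} vanishes as $\nu\to0$. The prefactor $\mathcal{P}^k(K_\Xi)$ is a fixed finite constant for $k\in\mathbb{Z}_{[0,L-1]}$, and $\varepsilon^*,w^*\le\nu\to0$ with $K_w$ and $\norm*{\mathcal{G}}_\infty$ fixed, so it suffices to establish two facts: (i) the optimizer norm $\norm*{\alpha^*}_1$ stays bounded as $\nu\to0$, and (ii) the residual $b\to0$. These two facts force each summand in \eqref{err_sim} to zero for every $k\in\mathbb{Z}_{[0,L-1]}$; combined with $e_{i,[0,d_i-1]}=\mathbf{0}$ from Theorem \ref{MIMO_inexact_sim_thm}, this yields $e_{i,[0,L+d_i-1]}\to0$.

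The heart of the argument is to exhibit, for each small $\nu$, a feasible candidate $\alpha_g$ for \eqref{alpha*} whose norm is bounded uniformly in $\nu$ and whose residual $\norm*{\mathcal{H}(\alpha_g)}_2$ is of order $\nu$. I would build $\alpha_g$ from the \emph{nominal} solution: since the true simulated trajectory $\{\bar{\mathbf{u}}_k,\bar y_{i,k}\}$ is by construction a trajectory of \eqref{NLsys}, and since $\{\hat\Psi_k(\mathbf{u},\Xi)\}$ is persistently exciting of order $L+n$, Theorem \ref{DB_MIMO_rep} supplies a fixed, $\nu$-independent vector $\alpha_{\mathrm{true}}$ (e.g.\ the minimum-norm solution of the nominal linear system) with $H_L(\hat\Psi(\mathbf{u},\Xi))\alpha_{\mathrm{true}}=\hat\Psi(\bar{\mathbf{u}},\bar\Xi)$ and $H_{L+1}(\Xi)\alpha_{\mathrm{true}}=\bar\Xi$. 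Because persistency of excitation of order $L+n$ together with controllability (Lemma \ref{lemma_controllable_pair}) makes $H_1(\Xi_{[0,N-L]})$ full row rank, and full rank persists under the $\mathcal{O}(\nu)$ perturbation $\tilde\Xi=\Xi+\omega$ with $\norm*{\omega}_\infty\le w^*$, for small $\nu$ the noisy matrix $H_1(\tilde\Xi_{[0,N-L]})$ is full row rank with uniformly bounded right inverse. I would then project $\alpha_{\mathrm{true}}$ onto the affine feasibility set by setting $\alpha_g=\alpha_{\mathrm{true}}+H_1(\tilde\Xi_{[0,N-L]})^\dagger(\bar\Xi_0-H_1(\tilde\Xi_{[0,N-L]})\alpha_{\mathrm{true}})$; since $\bar\Xi_0=H_1(\Xi_{[0,N-L]})\alpha_{\mathrm{true}}$ and $\tilde\Xi-\Xi=\mathcal{O}(\nu)$, the correction has norm $\mathcal{O}(\nu)$, so $\alpha_g$ satisfies \eqref{alpha*_b} with $\norm*{\alpha_g}_2$ bounded.

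Next I would bound the residual of this candidate. Comparing $\mathcal{H}(\alpha_g)=H_L(\hat\Psi(\mathbf{u},\tilde\Xi))\alpha_g-\hat\Psi(\bar{\mathbf{u}},H_{L+1}(\tilde\Xi)\alpha_g)$ with the exact nominal identities above, I would use local Lipschitz continuity of the basis functions in $\Xi$ (Remark \ref{Lipschitz_constants}) together with $\tilde\Xi-\Xi=\mathcal{O}(\nu)$ and $\alpha_g-\alpha_{\mathrm{true}}=\mathcal{O}(\nu)$ to show that both $H_L(\hat\Psi(\mathbf{u},\tilde\Xi))\alpha_g$ and $\hat\Psi(\bar{\mathbf{u}},H_{L+1}(\tilde\Xi)\alpha_g)$ equal $\hat\Psi(\bar{\mathbf{u}},\bar\Xi)$ up to $\mathcal{O}(\nu)$, whence $\norm*{\mathcal{H}(\alpha_g)}_2=\mathcal{O}(\nu)$. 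Optimality of $\alpha^*$ then gives $J(\alpha^*)\le J(\alpha_g)=\norm*{\mathcal{H}(\alpha_g)}_2^2+\lambda\nu\norm*{\alpha_g}_2^2=\mathcal{O}(\nu)$. Since $b=J(\alpha^*)-\lambda\nu\norm*{\alpha^*}_2^2\le J(\alpha^*)$, this yields $b\to0$, establishing fact (ii); and since $\lambda\nu\norm*{\alpha^*}_2^2\le J(\alpha^*)=\mathcal{O}(\nu)$, it yields $\norm*{\alpha^*}_2^2=\mathcal{O}(1)$, hence $\norm*{\alpha^*}_1$ bounded, establishing fact (i). Substituting into \eqref{err_sim} completes the argument.

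I expect the main obstacle to be the two perturbation arguments tying the noisy quantities to the nominal ones: certifying that the right inverse of $H_1(\tilde\Xi_{[0,N-L]})$ stays uniformly bounded as $\nu\to0$ (so the feasibility correction is genuinely $\mathcal{O}(\nu)$), and propagating the $\mathcal{O}(\nu)$ perturbations through the nonlinear, $\alpha$-dependent term $\hat\Psi(\bar{\mathbf{u}},H_{L+1}(\tilde\Xi)\alpha_g)$ by Lipschitz bounds to certify $\norm*{\mathcal{H}(\alpha_g)}_2=\mathcal{O}(\nu)$. A subtlety worth verifying is that $\alpha_{\mathrm{true}}$ can indeed be chosen $\nu$-independent with bounded norm, so that all the $\mathcal{O}(\cdot)$ constants are uniform in $\nu$.
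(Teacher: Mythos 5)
Your overall strategy is the same as the paper's: reduce the claim to showing that $\norm*{\alpha^*}$ stays controlled and $b\to0$, exhibit a feasible candidate whose cost $J$ is $O(\nu)$, and conclude by optimality of $\alpha^*$. The paper's candidate $\bar{\alpha}$ in \eqref{alpha_bar_definition} is the minimum-norm solution of the \emph{perturbed} exact system (built from $\hat{\Psi}+\hat{E}+\hat{D}$), and the paper then invokes perturbation theory for under-determined linear systems \cite[Theorem 5.6.1]{Golub12} to show $\bar{\alpha}\to\tilde{\alpha}$ so that the nonlinear term $\hat{\Psi}(\mathbf{\bar{u}},H_{L+1}(\tilde{\Xi})\bar{\alpha})$ converges to $\hat{\Psi}(\mathbf{\bar{u}},\bar{\Xi})$. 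You instead start from a nominal solution and add an $O(\nu)$ feasibility correction through a right inverse of $H_1(\tilde{\Xi}_{[0,N-L]})$, then push everything through by Lipschitz bounds; this avoids the Golub theorem and is arguably more elementary, but it shifts the burden onto the existence and $\nu$-uniform boundedness of your $\alpha_{\mathrm{true}}$.

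That is where the genuine gap sits. You claim Theorem \ref{DB_MIMO_rep} supplies $\alpha_{\mathrm{true}}$ with $H_L(\hat\Psi(\mathbf{u},\Xi))\alpha_{\mathrm{true}}=\hat\Psi(\bar{\mathbf{u}},\bar\Xi)$ and $H_{L+1}(\Xi)\alpha_{\mathrm{true}}=\bar\Xi$. But Theorem \ref{DB_MIMO_rep} is stated only for the nominal system \eqref{nominal_Lsys}, i.e.\ under $\scalebox{1.5}{$\epsilon$}\equiv0$; in the setting of Lemma \ref{lemma_eps_zero} the basis expansion is inexact, so the actual input driving the linear dynamics \eqref{Lsys3} is $\hat{\Psi}_k+\hat{E}_k$ (cf.\ \eqref{y_for_proofs}), not $\hat{\Psi}_k$. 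Willems' lemma therefore guarantees a vector satisfying $H_L(\hat\Psi(\mathbf{u},\Xi)+\hat E(\mathbf{u},\Xi))\alpha=\hat\Psi(\bar{\mathbf{u}},\bar\Xi)+\hat E(\bar{\mathbf{u}},\bar\Xi)$ together with the state equation, and the pair of identities you write down need not be simultaneously solvable for any $\alpha$ when $\varepsilon^*>0$. This is exactly why the paper's candidate \eqref{alpha_bar_definition} carries the $\hat{E}$ and $\hat{D}$ terms. The gap is repairable: define $\alpha_{\mathrm{true}}$ from the $\hat{\Psi}+\hat{E}$ system, note the discrepancy you introduce by dropping $\hat{E}$ is $O(\varepsilon^*(1+\norm*{\alpha_{\mathrm{true}}}_1))=O(\nu)$ and absorb it into your residual estimate, and justify that the relevant Hankel matrices retain full row rank (the hypothesis is PE of the \emph{noisy} sequence $\{\hat{\Psi}_k(\mathbf{u},\tilde{\Xi})\}$, so passing to the noiseless or error-corrected matrices itself requires the small-perturbation argument the paper relegates to its footnotes). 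You should also be explicit that $\alpha_{\mathrm{true}}$ is not literally $\nu$-independent — the data $\hat{\Psi}(\mathbf{u},\Xi)$ changes as the basis functions are refined to shrink $\varepsilon^*$ — so what you actually need is a uniform bound on its norm along the limiting family, the same uniformity the paper implicitly assumes for $\tilde{\alpha}$.
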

\begin{proof}
	It can be seen from \eqref{err_sim} that, for each fixed $k$, the error bound goes to zero as $\nu\to 0$ \emph{if} \textbf{(i)} $\nu\norm*{\alpha^*}_1\to0$ as $\nu\to0$ and \textbf{(ii)} $b\to0$ as $\nu\to 0$. To show these two properties, notice that for sufficiently small $\varepsilon^*$ and $w^*$, persistency of excitation of $\{\hat{\Psi}_k(\mathbf{u},\tilde{\Xi})\}_{k=0}^{N-1}$ implies that of $\{\hat{\Psi}_k(\mathbf{u},\tilde{\Xi}) + \hat{E}_k(\mathbf{u},\tilde{\Xi}) + \hat{D}_k(\omega)\}_{k=0}^{N-1}$ of the same order. This is the case, e.g., when $\norm{H_L(\hat{E}(\mathbf{u},\tilde{\Xi}) + \hat{D}(\omega))}_2<\sigma_{\textup{min}}(H_L(\hat{\Psi}(\mathbf{u},\tilde{\Xi})))$. Now, we propose the following candidate solution to \eqref{alpha*}
	\begin{gather}
			\hspace{-2mm}\bar{\alpha}\hspace{-1mm} = \hspace{-1mm}\begin{bmatrix}\hspace{-0.75mm}
				H_{L}(\hat{\Psi}(\mathbf{u},\tilde{\Xi}) \hspace{-1mm}+\hspace{-1mm} \hat{E}(\mathbf{u},\tilde{\Xi}) \hspace{-1mm}+ \hspace{-1mm}\hat{D}(\omega))\hspace{-0.75mm}\\ 	H_{1}(\tilde{\Xi}_{[0,N-L]})\end{bmatrix}^{\hspace{-0.5mm}\dagger}\hspace{-0.75mm}\begin{bmatrix}
				\hspace{-0.5mm}\hat{\Psi}(\mathbf{\bar{u}},\bar{\Xi}) \hspace{-1mm}+\hspace{-1mm} \hat{E}(\mathbf{\bar{u}},\bar{\Xi})\hspace{-0.5mm}\\ 	\bar{\Xi}_0
			\end{bmatrix}\hspace{-1.5mm}.\label{alpha_bar_definition}
	\end{gather}
\begin{figure*}[!t]
	\normalsize
	\begin{subequations}
			\begin{align}
				\overbrace{\norm*{\hspace{-0.25mm}H_L(\hat{\Psi}(\mathbf{u},\tilde{\Xi}))\alpha^*\hspace{-0.5mm} -\hspace{-0.5mm}\hat{\Psi}(\mathbf{\bar{u}},H_{L+1}(\tilde{\Xi})\alpha^*)\hspace{-0.25mm}}_2^2}^{=b}\hspace{-0.5mm}+\lambda\nu\norm*{\alpha^*}_2^2
				&\leq
				\norm*{\hspace{-0.25mm}H_{L}(\hat{\Psi}(\mathbf{u},\tilde{\Xi}))\bar{\alpha}\hspace{-0.5mm} -\hspace{-0.5mm}\hat{\Psi}(\mathbf{\bar{u}},H_{L+1}(\tilde{\Xi})\bar{\alpha}) \hspace{-0.25mm}}_2^2\hspace{-0.5mm}+ \lambda\nu\norm*{\bar{\alpha}}_2^2.
				\label{RHS_to_0a}
				\\
				b+\lambda\nu\norm*{\alpha^*}_2^2
				&\leq{\norm*{\Pi(w^*,\varepsilon^*)}_2^2}+ \lambda\nu\norm*{\bar{\alpha}}_2^2.\label{RHS_to_0b}
			\end{align}\label{RHS_to_0}
	\end{subequations}%
	\hrulefill
\end{figure*}%
	Notice that \eqref{alpha_bar_definition} is such that \eqref{alpha*_b} is satisfied since the pseudoinverse is in fact a right inverse given the implied persistency of excitation condition\footnote{According to \cite{Willems05}, such a matrix with the lower block row of the form $H_1(\Xi_{[0,N-L]})$ has full row rank. For sufficiently small $w^*$, the considered matrix still maintains full row rank and, hence, a right inverse exists.\label{imp_footnote}} (cf. \cite{Willems05}). By optimality, we have that $J(\alpha^*)\leq J(\bar{\alpha})$ (expanded in \eqref{RHS_to_0}), where $\Pi(w^*,\varepsilon^*)\coloneqq \hat{E}(\mathbf{\bar{u}},\bar{\Xi})-H_L(\hat{E}(\mathbf{{u}},\tilde{\Xi})+\hat{D}(\omega))\bar{\alpha} -  \hat{\Psi}(\mathbf{\bar{u}},H_{L+1}(\tilde{\Xi})\bar{\alpha}) + \hat{\Psi}(\mathbf{\bar{u}},\bar{\Xi})$ was obtained by plugging \eqref{alpha_bar_definition} into \eqref{RHS_to_0a}.\par
	If the right hand side of \eqref{RHS_to_0} goes to zero as $\nu\to0$, then the left-hand side of \eqref{RHS_to_0}, which is a sum of two non-negative terms, goes to zero as well, implying both \textbf{(i)} and \textbf{(ii)} above. Clearly, the term $\lambda\nu\norm{\bar{\alpha}}_2$ on the right-hand side of \eqref{RHS_to_0} goes to zero as $\nu\to0$ as long as $\norm{\bar{\alpha}}_2$ is bounded. This is also the case for the first two terms of the definition of $\Pi(\varepsilon^*,w^*)$. For the last two terms of $\Pi(\varepsilon^*,w^*)$ to cancel out, we need to show that $\bar{\alpha}\to\tilde{\alpha}$ as $\nu\to0$, where $\tilde{\alpha}$ is the {bounded} solution of the simulation problem in the nominal (unperturbed) setting (see \eqref{inexact_result}). This is done in the following.\par
	Recall from \eqref{alpha_bar_definition} that $\bar{\alpha}$ satisfies the following
	\begin{gather*}
\begin{bmatrix}\hspace{-0.5mm}
	H_{L}(\hat{\Psi}(\mathbf{u},\tilde{\Xi}) \hspace{-1mm}+\hspace{-1mm} \hat{E}(\mathbf{u},\tilde{\Xi}) \hspace{-1mm}+ \hspace{-1mm}\hat{D}(\omega))\hspace{-0.5mm}\\ 	H_{1}(\tilde{\Xi}_{[0,N-L]})\end{bmatrix}\hspace{-1mm}\bar{\alpha} = \hspace{-0.75mm}\begin{bmatrix}
	\hspace{-0.25mm}\hat{\Psi}(\mathbf{\bar{u}},\bar{\Xi}) \hspace{-1mm}+\hspace{-1mm} \hat{E}(\mathbf{\bar{u}},\bar{\Xi})\hspace{-0.25mm}\\ 	\bar{\Xi}_0
\end{bmatrix}\hspace{-1mm},
	\end{gather*}%
	which, for $\Delta\hat{\Psi}\coloneqq\hat{\Psi}(\mathbf{u},\tilde{\Xi})-\hat{\Psi}(\mathbf{u},\Xi)$, can be rewritten as
	\begin{gather}
		\hspace{-5mm}\Bigg(\overbrace{\begin{bmatrix}H_{L}(\hat{\Psi}(\mathbf{u},{\Xi}))\\ H_{1}({\Xi}_{[0,N-L]})\end{bmatrix}}^{\eqqcolon \Theta} \hspace{-1mm}+\hspace{-1mm} \overbrace{\begin{bmatrix}H_{L}(\Delta\hat{\Psi}+\hat{E}(\mathbf{u},\tilde{\Xi}) + \hat{D}(\omega))\\ H_1(\omega_{[0,N-L]})\end{bmatrix}}^{\eqqcolon \Delta \Theta}\Bigg) \bar{\alpha}\notag\\
		\hspace{15mm}=\Bigg(\overbrace{\begin{bmatrix}\hat{\Psi}(\mathbf{\bar{u}},\bar{\Xi})\\ \bar{\Xi}_0 \end{bmatrix}}^{\eqqcolon \mu} \hspace{-0.5mm}+\hspace{-0.5mm} \overbrace{\begin{bmatrix}\hat{E}(\mathbf{\bar{u}},\bar{\Xi}) \\ \boldsymbol{0}\end{bmatrix}}^{\eqqcolon \Delta \mu}\Bigg).\label{Golub}
	\end{gather}
	For a \emph{fixed} simulated trajectory $\mathbf{\bar{u}},\bar{\Xi}$, \eqref{Golub} represents a perturbed under-determined system of linear equations. Therefore, by applying results from perturbation theory of under-determined systems \cite[Theorem 5.6.1]{Golub12}, it can be shown that
	\begin{equation}
		\frac{\norm*{\bar{\alpha}\hspace{-0.5mm}-\hspace{-0.5mm}\tilde{\alpha}}_2}{\norm*{\tilde{\alpha}}_2}\hspace{-0.5mm}\leq\hspace{-0.5mm}\kappa_2(\Theta)\hspace{-0.5mm}\left(\hspace{-0.5mm}c_0\frac{\norm*{\Delta \Theta}_2}{\norm*{\Theta}_2} \hspace{-0.5mm}+\hspace{-0.5mm} \frac{\norm*{\Delta \mu}_2}{\norm*{\mu}_2}\hspace{-0.5mm}\right)\hspace{-0.5mm} +\hspace{-0.5mm} O(\varphi^2),\label{pertb_lin_sys}
	\end{equation}
	where $\tilde{\alpha}=\Theta^\dagger \mu$ is the \textit{bounded} minimum-norm solution of the unperturbed system of equations\footnote{For sufficiently small $\varepsilon^*,w^*$, PE of $\{\hat{\Psi}_k(\mathbf{u},\tilde{\Xi})\}_{k=0}^{N-1}$ implies that $\Theta+\Delta\Theta$ has full row rank (see footnote \ref{imp_footnote}). Similarly, it holds that $\Theta$ also has full row rank and, hence, the unperturbed set of equations has a solution.} and $\kappa_2(\Theta)=\norm*{\Theta}_2\norm*{\Theta^\dagger}_2$ is the condition number of the matrix $\Theta$. Moreover,\footnote{$n_{c,\Theta},\,n_{r,\Theta}$ are the numbers of columns and rows of $\Theta$, respectively.} $c_0\coloneqq\min{\left\lbrace2, n_{c,\Theta}-n_{r,\Theta}+1\right\rbrace}$ and $O(\varphi^2)$ are higher order terms with $\varphi\coloneqq\max{\left\lbrace\frac{\norm*{\Delta \Theta}_2}{\norm*{\Theta}_2},\frac{\norm*{\Delta \mu}_2}{\norm*{\mu}_2}\right\rbrace}$.\par
	As $\nu\to0$, the terms $\norm{\Delta\Theta},\,\norm{\Delta\mu}\to0$ due to Assumption~\ref{bounded_err_assmp} and local Lipschitz continuity of $\Psi$ (see \eqref{Golub}). Hence, it can be seen from \eqref{pertb_lin_sys} that the candidate $\bar{\alpha}$ approaches the bounded solution $\tilde{\alpha}$, which is finite for any fixed $\mathbf{\bar{u}},\bar{\Xi}$.
\end{proof}
Lemma \ref{lemma_eps_zero} shows that the error bound in \eqref{err_sim} (and, hence, the true error) goes to zero as $\max\{\varepsilon^*,w^*\}$ goes to zero if persistency of excitation of $\{\hat{\Psi}_k(\mathbf{u},\tilde{\Xi})\}_{k=0}^{N-1}$ is satisfied. In the next subsection, we study the data-based output matching problem for full-state feedback linearizable systems.
\subsection{Data-based output-matching}\label{om_sec}
The data-based output matching control problem is defined as follows.
\begin{definition}\label{DD_om_def}
	\textup{\textbf{Data-based output-matching control}\cite{Rapisarda08}\textbf{:}} Given a desired reference trajectory $\mathbf{\bar{y}}$ and a corresponding initial condition $\bar{\mathbf{x}}_0$ for the nonlinear system in \eqref{NLsys}, find the required input trajectory $\mathbf{\bar{u}}$ that, when applied to the system, results in an output that tracks the desired reference trajectory, using only input-output data.
\end{definition}
Analogous to the discussion in Section \ref{sim_sec}, we propose a similar approach to the one shown in Theorem \ref{MIMO_inexact_sim_thm} for solving the data-based output-matching control problem despite an unknown basis function approximation error and using only input and noisy output data. In particular, we assume that the a priori collected data and the trajectory to be matched evolve in a compact subset of the input-state space as in Assumption~\ref{bounded_err_assmp}. Next, we solve for an approximate control input $\hat{\mathbf{u}}_{[0,L-1]}$ that, when applied to the system, results in approximate output trajectories $\hat{y}_{i,[0,L+d_i-1]}$ which track the reference trajectories $\bar{y}_{i,[0,L+d_i-1]}$ as closely as possible. The error between the two outputs is bounded as shown in  Theorem \ref{MIMO_inexact_om_thm}. Before presenting the result, the following assumption is made which is needed in order to retrieve the desired input $\mathbf{\bar{u}}$.
\begin{assumption}\label{u_in_span_asmp}
		Let $\psi_j(\mathbf{u}_k,\Xi_k) = {u}_{j,k}$ for all $j\in\mathbb{Z}_{[1,m]}$.
\end{assumption}
Assumption \ref{u_in_span_asmp} is not restrictive since it can be replaced by only requiring $\mathbf{u}_k$ to lie in the span of ${\Psi}$, or that for any $\Xi$, $\Psi(\cdot,\Xi)$ is injective.\par
The following theorem is the dual result of Theorem \ref{MIMO_inexact_sim_thm}. For some vector $\alpha\in\mathbb{R}^{N-L+1}$ and given reference trajectories $\bar{y}_{i}$ (with $\bar{\Xi}$ being the corresponding transformed state (see \eqref{Xi})), we use $\hat{\Psi}(H_{L}(\mathbf{u})\alpha,\bar{\Xi})$ to denote the stacked vector of the sequence $\{\hat{\Psi}_k(H_{L}(\mathbf{u})\alpha,\bar{\Xi})\}_{k=0}^{L-1}$ with each element defined as $\hat{\Psi}_k(H_{L}(\mathbf{u})\alpha,\bar{\Xi})\coloneqq \Psi(H_1(\mathbf{u}_{[k,k+N-L]})\alpha,\bar{\Xi}_k)$.
\begin{theorem}\label{MIMO_inexact_om_thm}
		Suppose Assumptions \ref{MIMO_img_assmp}--\ref{u_in_span_asmp} are satisfied and let $\{\mathbf{u}_k\}_{k=0}^{N-1}$, $\{\tilde{y}_{i,k}\}_{k=0}^{N+d_i-1}$, for $i\in\mathbb{Z}_{[1,m]}$, be input-output data sequences collected from \eqref{NLsys}. Furthermore, let \(\{\bar{y}_{i,k}\}_{k=0}^{L+d_i-1}\) be desired reference trajectories with \(\bar{\Xi}_0=\begin{bmatrix}
			\bar{y}_{1,[0,d_1-1]}^\top & \dots & \bar{y}_{m,[0,d_m-1]}^\top
		\end{bmatrix}^\top\) specifying the initial condition for the state $\bar{\Xi}$ in \eqref{Lsys3}. Let the following optimization problem be feasible for the given $\bar{\Xi}_0$
		\begin{subequations}
			\begin{align}
				\hspace{-2mm}\alpha^{*}\in&\argmin\limits_{\alpha} J(\alpha)\hspace{-0.5mm}\coloneqq\hspace{-0.5mm} \norm*{\mathcal{H}}_2^2 \hspace{-0.5mm}+\hspace{-0.5mm} \lambda\max\{\hspace{-0.25mm}\varepsilon^*\hspace{-0.75mm},w^*\hspace{-0.5mm}\}\norm*{\alpha}_2^2,\label{alpha*2_a}\\
				&\textup{s.t. } \bar{\Xi}_0 = H_1(\tilde{\Xi}_{[0,N-L]})\alpha,\label{alpha*2_b}
			\end{align}\label{alpha*2}%
		\end{subequations}%
		with $\lambda>0$, $\mathcal{H}\coloneqq\begin{bmatrix}
			H_{L}(\hat{\Psi}(\mathbf{u},\tilde{\Xi}))\\ H_{L+1}(\tilde{\Xi})
		\end{bmatrix}\alpha - \begin{bmatrix}
			\hat{\Psi}(H_{L}(\mathbf{u})\alpha,\bar{\Xi})\\ \bar{\Xi}
		\end{bmatrix}$. Then, $\hat{\mathbf{u}}\coloneqq H_{L}(\mathbf{u})\alpha^*$ is the estimated input to the system that, when applied to \eqref{NLsys}, results in the approximate outputs $\hat{y}_{i,[0,L+d_i-1]}$, for $i\in\mathbb{Z}_{[1,m]}$. Furthermore, the error ${e}_{i}\vcentcolon=\bar{y}_{i}-\hat{y}_{i}$ satisfies $e_{i,[0,d_i-1]}=\mathbf{0}$ and is upper bounded by
		\begin{align}
			\hspace{-1mm}|{e}_{i,k+d_i}\hspace{-0.25mm}|\hspace{-0.25mm}&\leq\hspace{-0.25mm} \mathcal{P}^{k}(\hspace{-0.25mm}K_\Xi\hspace{-0.25mm})\big(\hspace{-0.25mm} \varepsilon^*(1\hspace{-0.25mm}+\hspace{-0.25mm}\norm*{\alpha^*}_1\hspace{-0.25mm}) \hspace{-0.25mm} + \hspace{-0.25mm}(\norm*{\mathcal{G}}_{\infty}+1)\hspace{-0.25mm}\sqrt{b}\notag\\
			&\qquad\qquad\quad+w^*(1+K_w)\norm*{\alpha^*}_1\big),\label{err_om}
		\end{align}
		for all $k\in\mathbb{Z}_{[0,L-1]}$, where $K_\Xi$ and $K_w$ are defined in Remark~\ref{Lipschitz_constants}, $b=J(\alpha^*)-\lambda\max\{\varepsilon^*,w^*\}\norm*{\alpha^*}_2^2$ and $\mathcal{P}^k(K_\Xi)=(K_\Xi)^k + (K_\Xi)^{k-1} + \dots + K_\Xi + 1$.
	\end{theorem}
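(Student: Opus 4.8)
The plan is to mirror the proof of Theorem \ref{MIMO_inexact_sim_thm}, dualizing the roles of input and state. First I would record the two optimality relations coming from the stacked residual $\mathcal{H}$. Writing $c=\mathcal{H}(\alpha^*)$ with $c^\top c=b$ and splitting $c=[c^{(1)\top},c^{(2)\top}]^\top$ according to the two block rows, the $k$-th blocks give $\Psi(\mathbf{\hat{u}}_k,\bar{\Xi}_k)=H_1(\hat{\Psi}_{[k,k+N-L]}(\mathbf{u},\tilde{\Xi}))\alpha^*-c^{(1)}_k$ and $H_1(\tilde{\Xi}_{[k,k+N-L]})\alpha^*=\bar{\Xi}_k+c^{(2)}_k$, where $\mathbf{\hat{u}}=H_L(\mathbf{u})\alpha^*$ and Assumption \ref{u_in_span_asmp} guarantees that the input rows of the first block vanish identically, so that $\mathbf{\hat{u}}$ is genuinely the input consistent with the basis evaluation. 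The constraint \eqref{alpha*2_b} fixes $\hat{\Xi}_0=\bar{\Xi}_0$, which establishes the base case $e_{i,[0,d_i-1]}=\mathbf{0}$ exactly as in the simulation proof.

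Next I would express the output actually produced by applying $\mathbf{\hat{u}}$ to \eqref{NLsys}, which by \eqref{y_for_proofs} is the true nonlinear response $\hat{y}_{i,k+d_i}=\phi_i(\mathbf{\hat{u}}_k,\hat{\Xi}_k)$ with $\hat{\Xi}_k$ the resulting (actual) state. The crucial difference from the simulation case is that $\hat{y}$ is no longer a column of the Hankel data matrix but the genuine system response; to expose the error I would insert the intermediate quantity $\phi_i(\mathbf{\hat{u}}_k,\bar{\Xi}_k)$ and decompose $e_{i,k+d_i}=[\bar{y}_{i,k+d_i}-\phi_i(\mathbf{\hat{u}}_k,\bar{\Xi}_k)]+[\phi_i(\mathbf{\hat{u}}_k,\bar{\Xi}_k)-\phi_i(\mathbf{\hat{u}}_k,\hat{\Xi}_k)]$. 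The second bracket is bounded by local Lipschitz continuity of $\phi_i$ in $\Xi$ by $K_\Xi\norm*{\bar{\Xi}_k-\hat{\Xi}_k}_\infty$, which is precisely the quantity the induction will feed back on.

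For the first bracket I would substitute \eqref{basis} to write $\phi_i(\mathbf{\hat{u}}_k,\bar{\Xi}_k)=g_i^\top\Psi(\mathbf{\hat{u}}_k,\bar{\Xi}_k)+\varepsilon_i(\mathbf{\hat{u}}_k,\bar{\Xi}_k)$, replace $\Psi(\mathbf{\hat{u}}_k,\bar{\Xi}_k)$ via the first optimality relation, and then use \eqref{y_for_proofs} together with the identities $g_i^\top\hat{E}_k=\hat{\varepsilon}_{i,k}$ and $g_i^\top\hat{D}_k=\hat{\delta}_{i,k}$ (which hold since $\mathcal{G}\mathcal{G}^\dagger=I_m$) to turn $g_i^\top H_1(\hat{\Psi}_{[k,\cdot]})\alpha^*$ into $H_1(\tilde{y}_{i,[k+d_i,\cdot]})\alpha^*$ minus the noise, approximation, and $\delta$ contributions. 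Finally, the second optimality relation converts $H_1(\tilde{y}_{i,[k+d_i,\cdot]})\alpha^*$ into $\bar{y}_{i,k+d_i}$ plus a single scalar component of $c^{(2)}$. Collecting terms, $\bar{y}_{i,k+d_i}$ cancels and the first bracket is bounded by $(\norm*{\mathcal{G}}_\infty+1)\sqrt{b}+\varepsilon^*(1+\norm*{\alpha^*}_1)+w^*(1+K_w)\norm*{\alpha^*}_1$; here the extra $+1$ alongside $\norm*{\mathcal{G}}_\infty$ (compared with \eqref{err_sim}) is exactly the residual $c^{(2)}$ picked up when matching the reference-state block, next to the $\norm*{\mathcal{G}}_\infty\sqrt{b}$ contributed by $g_i^\top c^{(1)}_k$.

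With the one-step estimate $|e_{i,k+d_i}|\leq K_\Xi\norm*{\bar{\Xi}_k-\hat{\Xi}_k}_\infty+(\norm*{\mathcal{G}}_\infty+1)\sqrt{b}+\varepsilon^*(1+\norm*{\alpha^*}_1)+w^*(1+K_w)\norm*{\alpha^*}_1$ in hand, I would close the argument by the same induction as in Theorem \ref{MIMO_inexact_sim_thm}: the base case $k=0$ uses $\bar{\Xi}_0=\hat{\Xi}_0$, and since $\mathcal{P}^{\bar{k}}(K_\Xi)$ is increasing, the entries of $\bar{\Xi}_k-\hat{\Xi}_k$ (namely the earlier errors $e_{i,[k,k+d_i-1]}$) are dominated by $\mathcal{P}^{k-1}(K_\Xi)$ times the constant factor, so that $K_\Xi\mathcal{P}^{k-1}(K_\Xi)+1=\mathcal{P}^{k}(K_\Xi)$ yields \eqref{err_om}. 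I expect the main obstacle to be the bookkeeping in the third step: unlike the simulation case, where $\hat{y}$ is literally a Hankel column and a single residual suffices, here one must justify chaining the two separate optimality residuals — the basis-function block and the reference-state block — and verify that the index $k+d_i$ for $k\in\mathbb{Z}_{[0,L-1]}$ is indeed represented in the stacked reference $\bar{\Xi}=[\bar{\Xi}_0^\top,\dots,\bar{\Xi}_L^\top]^\top$ so that the second relation is applicable at every step.
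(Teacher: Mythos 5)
Your proposal is correct and follows exactly the route the paper intends: the published proof is omitted with the remark that it ``follows similar steps as the proof of Theorem~\ref{MIMO_inexact_sim_thm},'' and your dualization — splitting the residual into the basis-function and reference-state blocks, using $g_i^\top\mathcal{G}^\dagger = e_i^\top$ to convert $g_i^\top H_1(\hat{\Psi})\alpha^*$ into shifted noisy outputs, and identifying the extra $+1$ in $(\norm*{\mathcal{G}}_\infty+1)\sqrt{b}$ as the contribution of the second residual block — is precisely the intended argument, closed by the same induction on $K_\Xi\norm*{\bar{\Xi}_k-\hat{\Xi}_k}_\infty$. The only cosmetic point is that the base case $e_{i,[0,d_i-1]}=\mathbf{0}$ rests on the system being initialized at $\bar{\mathbf{x}}_0=T^{-1}(\bar{\Xi}_0)$ (so the \emph{actual} state satisfies $\hat{\Xi}_0=\bar{\Xi}_0$), with the constraint \eqref{alpha*2_b} anchoring the data-based combination to that same initial condition; this does not affect the validity of your argument.
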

	\begin{proof}
		The proof follows similar steps as the proof of Theorem~\ref{MIMO_inexact_sim_thm} and is omitted for brevity.
\end{proof}
Theorem \ref{MIMO_inexact_om_thm} shows how an approximate solution to the data-based output-matching control problem can be obtained despite the unknown, but uniformly upper bounded, basis function approximation error and noisy output data.\par
The error bound shown in \eqref{err_om} shares many features to that shown in \eqref{err_sim}. For instance, it increases with increasing $k$ (i.e., for longer matched output sequences). The bound can be conservative as well, especially for $K_\Xi>1$. However, it has the same important qualitative property as the bound in \eqref{err_sim}. In particular, the error bound \eqref{err_om} goes to zero as $\max\{\varepsilon^*,w^*\}\to~0$ and if $\{\hat{\Psi}_k(\mathbf{u},\tilde{\Xi})\}_{k=0}^{N-1}$ is persistently exciting of order $L+n$. This can be shown using similar arguments as in Lemma \ref{lemma_eps_zero}.
	\section{Example}\label{examples}
In this section, we illustrate the results of Theorem \ref{MIMO_inexact_sim_thm} on a discretized model of a fully-actuated double inverted pendulum when using an inexact basis function approximation of the unknown nonlinearities and noisy output data (see \eqref{noisy_basis}). Using Euler's discretization of the continuous-time dynamics, we obtain the following discrete-time model\footnote{Although feedback linearization may in general be destroyed under discretization \cite{Grizzle88}, this is not the case for the system \eqref{dt_not_bruno}.}
\begin{align}
	&\mathbf{x}_{k+1} = \mathbf{x}_k + T_s\left(\mathcal{A}\mathbf{x}_k + \mathcal{B}Z_k\right),\quad \mathbf{y}_k = \mathcal{C} \mathbf{x}_k,\label{dt_not_bruno}\\
	&\mathbf{x}_k\coloneqq \begin{bmatrix}\theta_{1,k} & \vartheta_{1,k} & \theta_{2,k} & \vartheta_{2,k}\end{bmatrix}^\top, \quad \mathbf{y}_k=\begin{bmatrix}\theta_{1,k} & \theta_{2,k}\end{bmatrix}^\top,\notag\\
	&Z_k \hspace{-1mm}\coloneqq\hspace{-1mm}\begin{bmatrix}z_{1,k}\\ z_{2,k}\end{bmatrix} \hspace{-1mm}=\hspace{-1mm} M(\theta_k)^{-1}\hspace{-1mm}\left(\tau_k \hspace{-0.5mm} -\hspace{-0.5mm} C(\theta_k,\vartheta_k)\vartheta_k \hspace{-0.75mm}-\hspace{-0.75mm} G(\theta_k)\hspace{-0.25mm}\right)\hspace{-0.5mm}.
	\label{linearizing_controller_ct}
\end{align}
\indent In \eqref{dt_not_bruno}-\eqref{linearizing_controller_ct}, $\theta_k,\,\vartheta_k$ are the vectors of angular positions and angular velocities, respectively, $\tau_k$ is the vector of joint torques and $T_s$ is the sampling time. The terms $M(\theta),\,C(\theta,\vartheta)$ and $G(\theta)$ represent the inertia, dissipative and gravitational terms, respectively, and depend on the masses and lengths of the two links\footnote{The following (unknown) model parameters were used $m_1=m_2=1$kg and $l_1=l_2=0.5$m for the masses and lengths of the two links, respectively.} $m_1,m_2,l_1,l_2$ \cite{spong20}. The outputs in \eqref{dt_not_bruno} have relative degrees $d_1=d_2=2$ and $\sum_id_i=4=n$. Thus, by Theorem \ref{MIMO_FL_thm}, there exists a coordinate transformation $\Xi_k=T(\mathbf{x}_k)$ such that the transformed system is full-state feedback linearizable. This transformation takes the form (by prior model knowledge)
\begin{equation}
	\Xi_k = \begin{bmatrix}
	x_{1,k} & x_{1,k}+T_sx_{2,k} & x_{3,k} & x_{3,k} + T_sx_{4,k}
\end{bmatrix}^\top.\label{coordinate_transformation}
\end{equation}
Re-writing \eqref{dt_not_bruno} in the transformed coordinates results in
\begin{gather}
	\begin{matrix}
		\Xi_{k+1} = \mathcal{A}\Xi_k + \mathcal{B}\mathbf{v}_k,&\qquad 
		\mathbf{y}_k=\mathcal{C}\Xi_k,
	\end{matrix}\label{dt_bruno}\\
	\mathbf{v}_k \coloneqq \begin{bmatrix}
		v_{1,k} \\ v_{2,k}
	\end{bmatrix} = \begin{bmatrix}
		2\xi_{2,k} - \xi_{1,k} + T_s^2z_{1,k} \\
		2\xi_{4,k} - \xi_{3,k} + T_s^2z_{2,k}
	\end{bmatrix}.\label{linearizing_controller_dt}
\end{gather}
To approximate \eqref{linearizing_controller_dt}, we use the following basis functions
\begin{align}
	&\Psi(\tau_k,\Xi_k) =\label{choice_of_Psi}\\ 
	&{\tilde{M}\hspace{-0.5mm}\left(\hspace{-0.5mm}\begin{bmatrix}
			\xi_{1,k}\\ \xi_{3,k}
		\end{bmatrix}\hspace{-0.5mm}\right)}^{\hspace{-0.5mm}-1}\hspace{-1mm}\left(\hspace{-0.5mm} \tau_k \hspace{-0.5mm}- \hspace{-0.5mm}\tilde{C}(\Xi_k)\hspace{-0.5mm}\begin{bmatrix}
	(\xi_{2,k} - \xi_{1,k}) / T_s\\ (\xi_{4,k} - \xi_{3,k}) / T_s
\end{bmatrix}\hspace{-0.5mm} - \hspace{-0.5mm}\tilde{G}\hspace{-0.5mm}\left(\hspace{-0.5mm}\begin{bmatrix}
\xi_{1,k}\\ \xi_{3,k}
\end{bmatrix}\hspace{-0.5mm}\right)\hspace{-0.5mm}\right)\notag
\end{align}
where $\tilde{M},\,\tilde{C},\,\tilde{G}$ contain user-provided estimates for the parameter values (in this example, they were obtained by randomly perturbing the unknown real values by up to
5\%). This choice of basis functions is justified by the fact that \eqref{linearizing_controller_ct} is ubiquitous in robotics, unlike accurate model parameter values which can be very difficult to obtain.\par%
We collect input-output data of length $N=500$ (or 50 seconds with $T_s=0.1$) by operating the double inverted pendulum using a pre-stabilizing controller in the following compact subset of the input-state space
\begin{equation*}
	\Omega = \left \lbrace (\tau_k,\Xi_k)\in\mathbb{R}^2\times\mathbb{R}^4 ~\left|~ \begin{bmatrix}
		\tau_{\textup{lb}} \\ \Xi_{\textup{lb}}
	\end{bmatrix} \hspace{-0.5mm}\leq \hspace{-0.5mm}\begin{bmatrix}
		\tau_k\\ \Xi_k
	\end{bmatrix} \hspace{-0.5mm}\leq \hspace{-0.5mm}\begin{bmatrix}
		\tau_{\textup{ub}} \\ \Xi_{\textup{ub}}
	\end{bmatrix} \right. \right \rbrace,
\end{equation*}
where the inequalities are defined element-wise and $\tau_{\textup{ub}}= -\tau_{\textup{lb}} = \begin{bmatrix}
	20 & 20
\end{bmatrix}^\top$ Nm, $\Xi_{\textup{ub}}=-\Xi_{\textup{lb}}=\begin{bmatrix}
\pi/2 & \pi/2 & \pi/2 & \pi/2
\end{bmatrix}^\top$. In $\Omega$, the basis functions approximation error is upper bounded by\footnote{This was obtained by gridding $\Omega$ and numerically solving \eqref{def_of_G_mat}.} $\varepsilon^*= 0.7296$ as in Assumption \ref{bounded_err_assmp}. Furthermore, the data is contaminated by a random additive noise sampled from a uniform random distribution $U(-0.01,0.01)$ (i.e., $w^*= 0.01$). Figure \ref{fig_sim_results} shows the approximated output of the system as well as the true simulated trajectory when using $\lambda=0.1$. The error between the true and estimated simulated outputs is $\norm*{\bar{\mathbf{y}}-\hat{\mathbf{y}}}_2 = 0.1278$. It can be seen from the figure that the proposed method in Theorem \ref{MIMO_inexact_sim_thm} yields good results when the offline collected data has sufficient information about the system under consideration. In contrast, the error bound in \eqref{err_sim} can potentially be conservative due to the use of Lipschitz continuity arguments in its derivation. Finally, since Theorem \ref{MIMO_inexact_sim_thm} does not require persistency of excitation, it was observed that collecting longer but not necessarily persistently exciting data yields better results on the cost of increased computational burden to solve \eqref{alpha*}.
\begin{figure}[t]
	\centering\includegraphics[width=0.9\columnwidth]{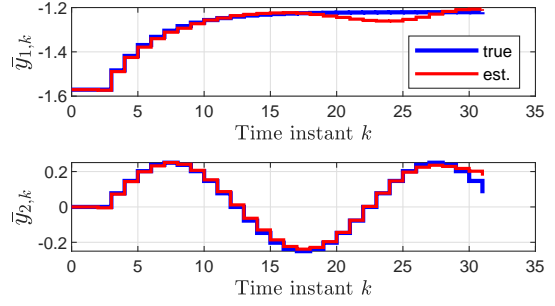}
	\caption{Results of the data-driven simulation problem using the procedure described in Theorem \ref{MIMO_inexact_sim_thm}. The blue lines represent the true outputs and the red lines represent the estimated outputs.}
	\label{fig_sim_results}
\end{figure}
	\section{Conclusions}\label{conclusion_sec}
In this paper, we presented an extension of Willems' fundamental lemma to the class of DT-MIMO feedback linearizable nonlinear systems. This was done by exploiting linearity in transformed coordinates and using a dictionary of known basis functions that depend only on input-output data. In practice, one must account for the non-zero approximation as well as noise in the output data. To that end, we presented constructive methods on how to solve the simulation and output matching problems despite unknown, but uniformly bounded, basis functions approximation errors and output noise, and provided error bounds on the difference between the estimated and actual output trajectories. These error bounds have an appealing qualitative property. In particular, we have shown that the error bounds (and, hence, the actual errors) go to zero if the basis functions approximation error as well as the noise in the data go to zero and persistency of excitation condition is satisfied.

	\bibliographystyle{ieeetr}
	\bibliography{references}
	
	\appendix
\subsection{The block-Brunovsky canonical form}\label{Bruno_app}
	The matrices $\left(\mathcal{A,B,C}\right)$ have the following form
	\begin{equation*}
		\begin{matrix}
			\mathcal{A} \hspace{-0.5mm}\coloneqq\hspace{-1mm} \begin{bmatrix}
			A_1 \hspace{-1mm} & \hspace{-1mm} \dots \hspace{-1mm} & \hspace{-1mm} \mathbf{0}\\
			\vdots \hspace{-1mm} & \hspace{-1mm} \ddots \hspace{-1mm}&\hspace{-1mm} \vdots\\
			\mathbf{0} \hspace{-1mm}& \hspace{-1mm}\dots\hspace{-1mm} & \hspace{-1mm}A_{m}
			\end{bmatrix}\hspace{-1mm},\hspace{-1mm} &\hspace{-1mm} \mathcal{B} \hspace{-0.5mm}\coloneqq \hspace{-1mm}\begin{bmatrix}
			B_1 \hspace{-1mm} & \hspace{-1mm} \dots \hspace{-1mm} & \hspace{-1mm} \mathbf{0}\\
			\vdots \hspace{-1mm} & \hspace{-1mm} \ddots \hspace{-1mm}&\hspace{-1mm} \vdots\\
			\mathbf{0} \hspace{-1mm}& \hspace{-1mm}\dots\hspace{-1mm} & \hspace{-1mm}B_{m}
		\end{bmatrix}\hspace{-1mm},\hspace{-1mm}&\hspace{-1mm}\mathcal{C} \hspace{-0.5mm}\coloneqq\hspace{-1mm} \begin{bmatrix}
				C_1 \hspace{-1mm} & \hspace{-1mm} \dots \hspace{-1mm} & \hspace{-1mm} \mathbf{0}\\
				\vdots \hspace{-1mm} & \hspace{-1mm} \ddots \hspace{-1mm}&\hspace{-1mm} \vdots\\
				\mathbf{0} \hspace{-1mm}& \hspace{-1mm}\dots\hspace{-1mm} & \hspace{-1mm}C_{m}
			\end{bmatrix}
		\end{matrix}\hspace{-0.5mm},
	\end{equation*}
	with $A_i\in\mathbb{R}^{d_i\times d_i},B_i\in\mathbb{R}^{d_i\times 1},C_i\in\mathbb{R}^{1\times d_i}$ for $i\in\mathbb{Z}_{[1,m]}$ defined as
	\begin{equation*}
		\begin{aligned}
			&\begin{matrix}
				A_i \coloneqq \begin{bmatrix}
					0&1&\dots&0 \\ \vdots&\ddots&\ddots&\vdots \\ \vdots& &\ddots&1 \\ 0&\dots&\dots&0
				\end{bmatrix}, &
				B_i \coloneqq \begin{bmatrix}
					0\\ \vdots\\ 0\\ 1
				\end{bmatrix}, &C_i^\top \coloneqq \begin{bmatrix}
				1 \\ 0 \\ \vdots \\ 0
			\end{bmatrix}.
			\end{matrix}
		\end{aligned}
	\end{equation*}
	\subsection{Proof of Lemma \ref{lemma_controllable_pair}}\label{app_controllable_pair}
	A necessary and sufficient condition for controllability of $(\mathcal{A,BG})$ is that the following matrix has full row rank
	\begin{equation}
		\begin{aligned}
			&\begin{bmatrix}\mathcal{BG} & \mathcal{ABG}  & \dots & \mathcal{A}^{n-1}\mathcal{BG} \end{bmatrix}\\
			&= \begin{bmatrix}
				B_1g_1^\top & A_1B_1g_1^\top & \cdots & A_1^{n-1}B_1g_1^\top\\
				\vdots & \vdots & \ddots & \vdots\\
				B_mg_m^\top & A_mB_mg_m^\top & \cdots & A_m^{n-1}B_mg_m^\top
			\end{bmatrix}.\label{ctrb_ABG_step1}
		\end{aligned}
	\end{equation}
	\indent By the structure of $(A_i,B_i)$ in Appendix \ref{Bruno_app}, each block row of \eqref{ctrb_ABG_step1} has the following form
	\begin{equation}
		\begin{aligned}
			&\begin{bmatrix}
				B_ig_i^\top & A_iB_ig_i^\top & \cdots & A_i^{n-1}B_ig_i^\top
			\end{bmatrix}\\[3ex]
			&=\left[
			\begin{array}{ccccc}
				\mathbf{0}& \cdots & \mathbf{0}& g_i^\top & \bovermat{$(\hspace{-0.25mm}n\hspace{-0.5mm}-\hspace{-0.5mm}d_i\hspace{-0.25mm})r\hspace{-0.25mm}$ cols.}{\mathbf{0}\quad \cdots \quad\mathbf{0}} \\
				\mathbf{0}& \cdots & g_i^\top & \mathbf{0}& \mathbf{0}\quad \cdots \quad \mathbf{0}\\
				\vdots & \scalebox{-1}[1]{$\ddots$} & \vdots & \vdots & \vdots \quad \cdots \quad \vdots \\
				g_i^\top & \cdots & \mathbf{0}& \mathbf{0}& \mathbf{0}\quad \cdots \quad \mathbf{0}\\
			\end{array}
			\right],
		\end{aligned}\label{ctrb_ABG_step2}
	\end{equation}
	which clearly has full row rank (i.e., rank $=d_i$) since $\mathcal{G}$ is full row rank and, hence, no row $g_i^\top$ is all zeros. Since, again, $\mathcal{G}$ is full row rank, then the concatenation of the block rows of the form in \eqref{ctrb_ABG_step2} results in a full row rank matrix \eqref{ctrb_ABG_step1} (i.e., rank $=\sum_id_i=n$), implying that the pair $(\mathcal{A,BG})$ is controllable.
	
\end{document}